      \theoremstyle{plain}
      \newtheorem{theorem}{Theorem}[section]
      \newtheorem*{theorem*}{Theorem}
      \newtheorem{lemma}[theorem]{Lemma}
      \newtheorem*{lemma*}{Lemma}
      \newtheorem{corollary}[theorem]{Corollary}
      \newtheorem*{corollary*}{Corollary}
      \theoremstyle{definition}
      \newtheorem{definition}[theorem]{Definition}
      \newtheorem*{definition*}{Definition}
      \theoremstyle{remark}
      \newtheorem{remark}[theorem]{Remark}
      \newtheorem*{remark*}{Remark}
      \newtheorem*{example*}{Example}
\DeclareMathOperator{\supp}{supp}
\newcommand{\wbar}[1]{\overline{#1}}
\newcommand{\what}[1]{\widehat{#1}}
\newcommand{\Ca}{\mathscr{C}}
\newcommand{\Cab}{\wbar{\mathscr{C}}}
\newcommand{\C}{\mathbb{C}}
\newcommand{\R}{\mathbb{R}}
\newcommand{\Tr}{\operatorname{Tr}}
\renewcommand{\Re}{\operatorname{Re}}
\renewcommand{\Im}{\operatorname{Im}}
\renewcommand{\O}{\Omega}
\renewcommand{\d}{\partial}
\newcommand{\db}{\wbar{\partial}}
\newcommand{\abs}[1]{\left\lvert #1 \right\rvert}
\newcommand{\norm}[1]{\left\lVert #1 \right\rVert}
\def\clap#1{\hbox to 0pt{\hss#1\hss}}
\def\mathrlap{\mathpalette\mathrlapinternal}
\def\mathrlapinternal#1#2{\rlap{$\mathsurround=0pt#1{#2}$}}
\title{Stability and uniqueness for the inverse problem of the Schr\"odinger equation in 2D with potentials in $W^{\varepsilon,p}$}
\author{Eemeli Bl{\aa}sten}
\date{}
\begin{document}
\maketitle
\section{Forewords}
This result will be published as part of my PhD thesis later. This manuscript contains the proof of the claim, but is not peer-reviewed. The proof can still be streamlined, especially by proving a better version of lemma \ref{boundaryInt}. This will require interpolation between Lorentz and BMO spaces in a domain.

\section{Abstract}
We prove uniqueness and stability for the inverse problem of the 2D Schr\"odinger equation in the case that the potentials give well posed direct problems and are in $W^{\varepsilon,p}(\Omega)$, $\varepsilon>0$, $p>2$. The idea of the proof is to use Bukhgeim's oscillating solutions $e^{in(z-z_0)^2}f$, $e^{in(\wbar{z}-\wbar{z_0})^2}g$. By Alessandrini's identity and stationary phase we get information about $q_1-q_2$ at $z_0$ from the Dirichlet-Neumann maps $\Lambda_{q_1}-\Lambda_{q_2}$.

Using interpolation, we see that the the worst of the remainder terms decays like $n^{1 - \varepsilon - \beta}$. Here $q_j \in W^{\varepsilon,p}$ and $\beta$ is the exponent in the norm estimate for the conjugated Cauchy operator in theorem \ref{BIGTHM}. We get $\beta$ arbitrarily close to $1$, so have uniqueness and stability for $\varepsilon > 0$.

The main inspiration for this proof has come from three different sources: \cite{bukhgeim}, \cite{alessandrini} and the lecture notes \cite{salo}. For technical details we have mainly used \cite{ONeil}, \cite{BL}, \cite{triebel1}.

\newpage
\section{Notation and general remarks}
\begin{itemize}
\item We denote the unit disc in $\C$ by $\O$.
\item Given $p\in\R$ we denote by $p*$ the number whose Sobolev conjugate $p$ is: $\frac{1}{p*} = \frac{1}{2} + \frac{1}{p}$.
\item All the norms are taken in $\O$ unless otherwise specified.
\item We may write for example $L^p(\O,z_0)$ to specify that the norm is taken with respect to $z_0$.
\item Some spaces we are going to use
\subitem $L^p$: the standard Lebesgue space of index $p \in [1,\infty]$.
\subitem $W^{k,p}$, $k$ integer: the space of $L^p$ functions whose distribution derivatives of order up to $k$ are also in $L^p$
\subitem $L^{(p,q)}$, $p>1$, $0<q\leq \infty$: the Lorentz space (with norm), as defined in \cite{ONeil}
\subitem $W^{s,p}$ for $s \in \R$: The Sobolev spaces as restrictions to $\O$ of the ones defined in \cite{BL}
\subitem $C^k(\wbar{\O})$, $k$ integer: the space of uniformly continuous functions on $\wbar{\O}$ whose derivatives of order up to $k$ are also uniformly continuous on $\wbar{\O}$
\item We don't always write the whole symbol for the space when taking the norm: 
\subitem $\norm{\cdot}_p$ denotes the $L^p$ norm
\subitem $\norm{\cdot}_{s,p}$ denotes the $W^{s,p}$ norm
\subitem $\norm{\cdot}_{(p,q)}$ denotes the $L^{(p,q)}$ norm
\item Interpolation spaces: In $X_\theta$ and $X'_\theta$ the variable of the continuous space is usually $z_0$.
\subitem $A_\theta = F_\theta(L^p,W^{1,p})$
\subitem $X_\theta = F_\theta\big(C^0(\wbar{\O},L^p), C^0(\wbar{\O},W^{1,p})\big)$
\subitem $X_\theta = F_\theta\big(C^0(\wbar{\O},L^\infty), C^0(\wbar{\O},W^{1,p})\big)$
\item By expressions like $qf$, $q\in A_\theta$, $f\in X'_\theta \cup X'_\theta$ we mean the element $\tilde{q}f$, where $\tilde{q}(z_0) = q$ for all $z_0$.
\end{itemize}

\section{Stationary phase method}

\begin{lemma}[Mean-value inequality]
\label{MVI}
Let $f : X \to \C$, $X \subset \C$ be convex, $f \in C^1(\wbar{X})$. Then for all $x,y \in X$
\begin{equation}
\abs{f(x) - f(y)} \leq
	\begin{cases}
		\norm{\sqrt{\abs{\d_1 f}^2 + \abs{\d_2 f}^2}}_{L^\infty(X)} \abs{x - y} \\
		\sqrt{2} \norm{\sqrt{\abs{\d f}^2 + \abs{\db f}^2}}_{L^\infty(X)} \abs{x - y}
	\end{cases}.
\end{equation}
\end{lemma}

\begin{proof}
By \cite[Thm. 7.20]{rudin} we have
\begin{equation}
\begin{split}
\lvert &f(x) - f(y)\rvert = \abs{\int_0^1 \frac{d}{dt} f \left(tx+(1-t)y\right) dt} \\
&= \abs{\int_0^1 \left( \Re \nabla f \cdot (x_1-y_1,x_2-y_2) + i \Im \nabla f \cdot (x_1-y_1,x_2-y_2) \right) dt} \\
&\leq \int_0^1 \sqrt{ \abs{\Re \nabla f}^2 + \abs{\Im \nabla f}^2 } \, \abs{x - y} dt \leq \norm{\abs{\nabla f}}_{L^\infty(X)} \abs{x - y}.
\end{split}
\end{equation}
Note that $\abs{\Re \nabla f}^2 + \abs{\Im \nabla f}^2 = \abs{\d_1 f}^2 + \abs{\d_2 f}^2 = 2\left( \abs{\d f}^2 + \abs{\db f}^2 \right)$, from which the claim follows.
\end{proof}

\begin{lemma}
\label{GaussianHolderNorm}
Let $\alpha \geq 0$ and $\xi \in \C$. Then we have
\begin{equation}
\abs{1 - e^{-i(\xi^2+\wbar{\xi}^2)}} \leq 2^{1+\alpha/2}\abs{\xi}^\alpha.
\end{equation}
\end{lemma}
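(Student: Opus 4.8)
The plan is to reduce the inequality to the two elementary one–variable bounds $\abs{1 - e^{it}} = 2\abs{\sin(t/2)} \le 2$ and $\abs{1 - e^{it}} \le \abs{t}$, valid for every real $t$, and then to interpolate between them. So the whole argument is soft; the only step with genuine content is an observation about the phase.

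That observation is that the phase $\xi^2 + \wbar{\xi}^2$ is \emph{real}: writing $\xi = a + ib$ one has $\xi^2 + \wbar{\xi}^2 = 2(a^2 - b^2) = 2\Re(\xi^2) \in \R$, so $e^{-i(\xi^2 + \wbar{\xi}^2)}$ lies on the unit circle and
\[
\abs{\xi^2 + \wbar{\xi}^2} = 2\,\abs{\Re(\xi^2)} \le 2\,\abs{\xi^2} = 2\abs{\xi}^2 .
\]
Applying the two bounds above with $t = -(\xi^2 + \wbar{\xi}^2)$ then gives both $\abs{1 - e^{-i(\xi^2 + \wbar{\xi}^2)}} \le 2$ and $\abs{1 - e^{-i(\xi^2 + \wbar{\xi}^2)}} \le 2\abs{\xi}^2$.

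To finish, raise these two bounds to the powers $1 - \alpha/2$ and $\alpha/2$ respectively and multiply (this is where $\alpha/2 \in [0,1]$, i.e. $0 \le \alpha \le 2$, is used):
\[
\abs{1 - e^{-i(\xi^2 + \wbar{\xi}^2)}} \le 2^{\,1 - \alpha/2}\bigl(2\abs{\xi}^2\bigr)^{\alpha/2} = 2\abs{\xi}^\alpha \le 2^{\,1 + \alpha/2}\abs{\xi}^\alpha ,
\]
which is the claim, with room to spare. Equivalently, one can avoid fractional powers of the bounds and simply split into the cases $\abs{\xi} \le 1$ and $\abs{\xi} > 1$, using $\min(2, 2\abs{\xi}^2) = 2\min(1, \abs{\xi}^2) \le 2\abs{\xi}^\alpha$ for $\alpha \le 2$.

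I do not expect any real obstacle here. The single point to keep an eye on is the range of $\alpha$: the interpolation step is precisely where $\alpha \le 2$ enters, and in fact for $\alpha > 2$ the stated estimate already fails as $\xi \to 0$ along the real axis, where the left-hand side is comparable to $2\abs{\xi}^2$. Since in the applications $\alpha$ plays the role of a (small) smoothness exponent — the lemma is a Hölder-type estimate at the origin for the Gaussian phase — this restriction is harmless.
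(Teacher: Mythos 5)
Your proof is correct, but it follows a different route from the paper. The paper splits into the cases $\abs{\xi} \leq 2^{-1/2}$ and $\abs{\xi} \geq 2^{-1/2}$: on the small disc it applies the mean-value inequality of Lemma \ref{MVI} to $\xi \mapsto e^{-i(\xi^2+\wbar{\xi}^2)}$ (whose gradient there has size at most $2\sqrt{2}\abs{\xi}$), obtaining a bound linear in $\abs{\xi}$ and then absorbing $\abs{\xi}^{1-\alpha} \leq (2^{-1/2})^{1-\alpha}$; on the complement it uses only that the phase is real, so the left side is at most $2$. You instead use the one-variable bounds $\abs{1-e^{it}} \leq \min(2,\abs{t})$ together with $\abs{\xi^2+\wbar{\xi}^2} \leq 2\abs{\xi}^2$ and a multiplicative interpolation $x \leq A^{1-\alpha/2}B^{\alpha/2}$, which avoids both the case split and Lemma \ref{MVI}, gives the sharper constant $2$ in place of $2^{1+\alpha/2}$, and covers the full range $0 \leq \alpha \leq 2$ (your bound is quadratic rather than linear in $\abs{\xi}$ near the origin). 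Your side remark is also a genuine catch: the lemma as stated for all $\alpha \geq 0$ is false for $\alpha > 2$ (take $\xi \to 0$ real), and the paper's own argument in fact requires $\alpha \leq 1$, since the step $\sup_{\abs{\xi}\leq 2^{-1/2}}\abs{\xi}^{1-\alpha} \leq (2^{-1/2})^{1-\alpha}$ breaks down for $\alpha > 1$; this is harmless for the paper because the lemma is only invoked with $\alpha = \theta < \tfrac{1}{2}$, but your version both repairs the hypothesis (stating $\alpha \leq 2$ explicitly) and proves strictly more than the paper's proof does.
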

\begin{proof}
A direct calculation, the two cases to consider are $\abs{\xi} < \frac{1}{\sqrt{2}}$ and $\abs{\xi} \geq \frac{1}{\sqrt{2}}$. We use lemma \ref{MVI} to get the first case.
\begin{equation}
\begin{split}
\sup_{\abs{\xi} \leq 2^{-1/2}} & \frac{\abs{1 - e^{-i(\xi^2 + \wbar{\xi}^2)}}}{\abs{\xi}^\alpha} \\
&\leq  \sup_{\abs{\xi} \leq 2^{-1/2}} \sqrt{2} \norm{\sqrt{\abs{-2ize^{-i(z^2+\wbar{z}^2)}}^2 + \abs{-2i\wbar{z}e^{-i(z^2+\wbar{z}^2)}}^2}}_{\mathrlap{L^\infty(2^{-1/2}\O)}} \,\,\, \abs{\xi}^{1-\alpha} \\
&\leq \sqrt{2}\cdot 2 \cdot \sqrt{2} \cdot 2^{-1/2} (2^{-1/2})^{1-\alpha} \leq 2^{1 + \alpha/2}
\end{split}
\end{equation}
The second case follows because $\xi^2 + \wbar{\xi}^2 \in \R$.
\begin{equation}
\sup_{\abs{\xi} \geq 2^{-1/2}} \frac{\abs{1 - e^{-i(\xi^2 + \wbar{\xi}^2)}}}{\abs{\xi}^\alpha} \leq \sup_{\abs{\xi} \geq 2^{-1/2}} \frac{2}{\abs{\xi}^\alpha} = 2^{1+\alpha/2}
\end{equation}
\end{proof}

Next we denote $R = (z-z_0)^2 + (\wbar{z} - \wbar{z_0})^2$, where $z_0$ is a point in $\C$.

\begin{lemma}[Stationary phase]
\label{stationaryPhase}
Let $Q\in W^{\alpha,2}(\C)$, $\alpha \geq 0$, $n > 0$. Then
\begin{equation}
\norm{Q - \frac{2n}{\pi} \int_\C e^{inR}Q(z)\,dm(z)}_{L^2(\C,z_0)} \leq C_\alpha n^{-\alpha/2} \norm{Q}_{W^{\alpha,2}(\C)},
\end{equation}
where $C_\alpha < \infty$.
\end{lemma}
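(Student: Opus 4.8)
The plan is to diagonalize the averaging operator by the Fourier transform and reduce everything to a pointwise bound on the resulting multiplier. Writing $w = z - z_0 = w_1 + iw_2$, one has $R = w^2 + \wbar{w}^2 = 2(w_1^2 - w_2^2)$, so after the translation $z \mapsto z_0 + w$ the operator $T_nQ(z_0) := \frac{2n}{\pi}\int_\C e^{inR}Q(z)\,dm(z)$ becomes a convolution $T_n Q = K_n * Q$ with the Fresnel kernel $K_n(w) = \frac{2n}{\pi}e^{2in(w_1^2 - w_2^2)}$. For $Q$ Schwartz this oscillatory integral is classical; in general I would take the convolution/multiplier description as the definition of $T_n$ on $L^2(\C)$ (it extends to a unitary operator there, since the multiplier will have modulus $1$) and note it agrees with the integral whenever the latter converges, the relevant $Q$ being dense in $W^{\alpha,2}(\C)$.

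First I would compute $\what{K_n}$ explicitly. Using the one-dimensional Fresnel integrals $\int_\R e^{\pm 2inw^2 - iw\xi}\,dw = \sqrt{\pi/(2n)}\,e^{\pm i\pi/4}\,e^{\mp i\xi^2/(8n)}$ and multiplying the two factors, the constant $\frac{2n}{\pi}$ and the phases $e^{\pm i\pi/4}$ cancel exactly, leaving
\begin{equation}
\what{K_n}(\xi) = e^{-i(\xi_1^2 - \xi_2^2)/(8n)}, \qquad \abs{\what{K_n}(\xi)} = 1, \qquad \what{K_n}(0) = 1.
\end{equation}
In particular $\frac{2n}{\pi}\int_\C e^{inR}\,dm = \what{K_n}(0) = 1$, which is exactly why $T_n$ is the correctly normalized approximate identity. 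By Plancherel,
\begin{equation}
\norm{Q - T_nQ}_{L^2(\C,z_0)}^2 = (2\pi)^{-2}\int_\C \abs{1 - e^{-i(\xi_1^2-\xi_2^2)/(8n)}}^2 \abs{\what Q(\xi)}^2\,dm(\xi).
\end{equation}

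The core of the argument is then the pointwise multiplier estimate
\begin{equation}
\abs{1 - e^{-i(\xi_1^2 - \xi_2^2)/(8n)}} \leq C_\alpha\, n^{-\alpha/2}\,(1 + \abs{\xi}^2)^{\alpha/2}, \qquad \xi \in \C,\ n > 0,
\end{equation}
for $\alpha$ in the relevant range (which covers the applications with small $\alpha$). This I would prove from the elementary inequality $\abs{1 - e^{-is}} \leq \min(2,\abs{s})$ together with $\abs{\xi_1^2 - \xi_2^2} \leq \abs{\xi}^2$: when $\abs{\xi}^2 \geq n$ the right-hand side is at least $n^{-\alpha/2}\abs{\xi}^\alpha \geq 1$, so the trivial bound $2$ suffices; and when $\abs{\xi}^2 < n$ one uses $\abs{1 - e^{-is}} \leq \abs{s} \leq \abs{\xi}^2/(8n)$ together with $\abs{\xi}^2 \leq n^{1-\alpha/2}(1+\abs{\xi}^2)^{\alpha/2}$, which follows from $\abs{\xi}^{2-\alpha} \leq n^{1-\alpha/2}$ and $(1+\abs{\xi}^2)^{\alpha/2} \geq \abs{\xi}^\alpha$. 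Inserting this into the Plancherel identity and recalling that $\norm{Q}_{W^{\alpha,2}(\C)}$ is comparable to $\norm{(1+\abs{\xi}^2)^{\alpha/2}\what Q}_{L^2(\C)}$ (the Bergh--L\"ofstr\"om definition) gives $\norm{Q - T_nQ}_{L^2}^2 \leq C_\alpha^2\, n^{-\alpha}\norm{Q}_{W^{\alpha,2}(\C)}^2$, and taking square roots finishes the proof.

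The only genuinely delicate points are: (i) the rigorous identification of the oscillatory integral with the convolution operator $K_n * \,\cdot\,$ and the evaluation of the divergent-looking Fresnel integral for $\what{K_n}$ — handled by a standard contour-rotation or $\epsilon$-regularization argument on Schwartz functions and then density in $W^{\alpha,2}(\C)$; and (ii) keeping track of the fact that a single quadratic phase produces exactly one power of $\abs{\xi}^2/n$, so that the method yields the decay $n^{-\alpha/2}$ precisely against the $W^{\alpha,2}$ norm and no better — this is why the exponent is $\alpha/2$ and not $\alpha$, and also the reason the estimate is sharp only in the regime $\alpha \leq 2$.
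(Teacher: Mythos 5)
Your proposal is correct and follows essentially the same route as the paper: translate/Fourier transform so that the operator becomes multiplication by $e^{-i(\xi^2+\wbar{\xi}^2)/(16n)}=e^{-i(\xi_1^2-\xi_2^2)/(8n)}$, apply Plancherel, and conclude from a pointwise H\"older-type bound on $1-e^{-is}$ against $n^{-\alpha/2}(1+\abs{\xi}^2)^{\alpha/2}$ together with the Fourier characterization of $W^{\alpha,2}(\C)$. The only differences are cosmetic: you compute the Fresnel kernel explicitly and prove the multiplier bound from $\abs{1-e^{-is}}\leq\min(2,\abs{s})$, where the paper invokes its Lemma \ref{GaussianHolderNorm} (via the mean-value inequality), and your explicit restriction to $\alpha\leq 2$ is harmless since only small $\alpha$ is used later (indeed it is more honest than the paper's ``$\alpha\geq 0$'' statement of Lemma \ref{GaussianHolderNorm}, whose proof as written only covers $\alpha\leq 1$).
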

\begin{proof}
A direct calculation using the Fourier transform and lemma \ref{GaussianHolderNorm}:
\begin{multline}
\norm{Q - \frac{2n}{\pi} \int_\C e^{inR}Q(z)\,dm(z)}_{L^2(\C,z_0)} = \norm{ \what{Q} - e^{-i\frac{\xi^2 + \wbar{\xi}^2}{16n}} \what{Q} }_{L^2(\C)} \\
\leq 4^{-\alpha} n^{-\alpha/2} \norm{\frac{\Big|1-e^{-i\big((\frac{\xi}{4\sqrt{n}})^2 + (\wbar{\frac{\xi}{4\sqrt{n}}})^2\big)}\Big|} {\big|\frac{\xi}{4\sqrt{n}}\big|^\alpha} \, \abs{\xi}^\alpha \what{Q}}_{L^2(\C)} \\
\leq 2^{1-3\alpha/2} n^{-\alpha/2} \norm{\abs{\xi}^\alpha \what{Q}}_{L^2(\C)} \leq C_\alpha n^{-\alpha/2} \norm{Q}_{W^{\alpha,2}(\C)}
\end{multline}
\end{proof}

\section{Bukhgeim type solutions}
We prove the existence of Bukhgeim's solutions and give some norm estimates for them. By $\Ca$ and $\wbar{\Ca}$ we denote the Cauchy-operators (convolution with $z^{-1}$ and $\wbar{z}^{-1}$, respectively). All the norms taken here are in $\O$.

We use interpolation theory to prove a norm estimate for the remainder terms in an intermediate space between $L^p$ and $W^{1,p}$. This estimate is of the form $\norm{r}_\theta \leq n^{-\beta} \norm{q}_\theta$, where $\beta$ does not depend on $\theta$. This $\beta$ will in fact give the speed at which the modulus of continuity in the stability estimate goes to zero when the potentials have one Sobolev derivative. The only place where we require smoothness is when integrating by parts. Thus if $\beta > 0$ we have integrated by parts too much, because we could get stability with a smaller value of $\beta$.

The main point is that if we have a stability estimate with a modulus of continuity, we may worsen that modulus to let the potentials be in a bigger space.

\bigskip
First we prove some estimates for the Cauchy-operators.

\begin{lemma}
\label{ONeilLemma1}
Let $2< p < \infty$, $\frac{1}{p*} = \frac{1}{p} + \frac{1}{2}$. Then there are $C,C_p < \infty$ suth that if $f \in L^{p*}$ then
\begin{equation}
\norm{\Ca f}_p \leq C_p \norm{f}_{p*}
\end{equation}
and for $f \in L^{(2,1)}(\O)$ we have
\begin{equation}
\norm{\Ca f}_{L^\infty(\O)} \leq C \norm{f}_{L^{(2,1)}(\O)}.
\end{equation}
\end{lemma}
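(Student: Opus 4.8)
The plan is to treat the two estimates as instances of Young-type inequalities for the fractional-integration kernel $z^{-1}$, using the Lorentz-space refinement of Young's convolution inequality found in \cite{ONeil}. First I would observe that $\Ca f = k * f$ where $k(z) = 1/(\pi z)$ (or simply $1/z$ up to the harmless normalising constant), and crucially that the function $z \mapsto 1/z$, restricted to any bounded set such as a fixed ball containing $\O - \O$, lies in the weak-$L^2$ space $L^{(2,\infty)}$. Indeed $m(\{z : |1/z| > \lambda\}) = m(\{|z| < 1/\lambda\}) = \pi \lambda^{-2}$, so $\norm{1/z}_{L^{(2,\infty)}} < \infty$; since all norms are taken over $\O$ and the convolution only sees $z - w$ with $z, w \in \O$, we may as well work with $k \in L^{(2,\infty)}(2\O)$, which has finite Lorentz norm.

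For the first estimate I would invoke O'Neil's inequality $\norm{g * h}_{L^{(r,s)}} \leq C \norm{g}_{L^{(p_1,s_1)}} \norm{h}_{L^{(p_2,s_2)}}$ valid when $1 + 1/r = 1/p_1 + 1/p_2$ and $1/s = 1/s_1 + 1/s_2$. Taking $h = f \in L^{p*} = L^{(p*,p*)}$ and $g = k \in L^{(2,\infty)}$, the scaling relation $1/p*  + 1/2 = 1 + 1/p$ reads $1 + 1/p = 1/p* + 1/2$, so the output integrability exponent is exactly $p$; the second indices give $1/\infty + 1/p* = 1/p*$, so we land in $L^{(p,p*)} \subset L^{(p,p)} = L^p$ because $p* < p$ and Lorentz spaces with the same first index increase with the second index. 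This yields $\norm{\Ca f}_p \leq C_p \norm{f}_{p*}$. For the second estimate, the target $L^\infty = L^{(\infty,\infty)}$ forces $1/p_1 + 1/p_2 = 1$, and we want $g = k$ with $p_1 = 2$; then $p_2 = 2$ as well, so we need $f \in L^{(2,s_2)}$ with $1/\infty = 1/\infty + 1/s_2$, i.e. $s_2$ arbitrary but the cleanest endpoint is $f \in L^{(2,1)}$, which is precisely the hypothesis. O'Neil's theorem then gives $\norm{k * f}_{L^{(\infty,\infty)}} \leq C \norm{k}_{L^{(2,\infty)}} \norm{f}_{L^{(2,1)}}$, i.e. $\norm{\Ca f}_{L^\infty(\O)} \leq C \norm{f}_{L^{(2,1)}(\O)}$.

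The main technical point to get right — and the step I expect to need the most care — is the bookkeeping at the Lorentz endpoints: one must check that the convolution $k * f$ is actually well defined pointwise and that O'Neil's inequality applies with a kernel only in weak-$L^2$ rather than in a genuine Lorentz space $L^{(2,s)}$ with $s < \infty$. For the $L^\infty$ estimate this is exactly the borderline case of O'Neil's result (the product $L^{(2,\infty)} \cdot L^{(2,1)} \hookrightarrow L^{(\infty,\infty)}$ is the classical duality pairing, essentially Hölder's inequality in Lorentz spaces), so it is available off the shelf; one should just confirm the constant is finite and independent of $f$. A minor alternative, if one prefers to avoid the endpoint, is to note that on the bounded set $\O$ one has $L^{(2,\infty)} \cap L^1 \hookrightarrow L^{(2,s)}$ for every $s > 1$, but this is not needed. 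Finally I would remark that restricting the kernel to $2\O$ introduces only the error $k \cdot \mathbf{1}_{\C \setminus 2\O} * f$, which is bounded on $\O$ by a smooth kernel and hence harmless, so the localisation costs nothing.
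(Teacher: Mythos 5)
Your proposal is correct and follows essentially the same route as the paper: both write $\Ca f = \frac{1}{\pi z} \ast f$ with $\frac{1}{\pi z} \in L^{(2,\infty)}$ and apply O'Neil's convolution theorems with the index choices $(p_1,q_1)=(2,\infty)$ and $(p_2,q_2)=(p*,p*)$ for the first bound, $(2,1)$ for the second. The only cosmetic differences are that you land in $L^{(p,p*)}$ and embed it into $L^{(p,p)}=L^p$ where the paper invokes O'Neil's inequality directly with second index $s=p$, and that your localisation of the kernel to $2\O$ is unnecessary since $1/z$ lies in $L^{(2,\infty)}(\C)$ globally.
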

\begin{proof}
By \cite[thm 2.6]{ONeil} we have for $f \in L^{(p_1,q_1)}$, $g\in L^{(p_2,q_2)}$,
\begin{equation}
\frac{1}{p_1}+\frac{1}{p_2} > 1, \quad \frac{1}{p_1}+\frac{1}{p_2} - 1 = \frac{1}{r}, \quad  s \geq 1 \quad \text{such that } \frac{1}{q_1} + \frac{1}{q_2} \geq \frac{1}{s}
\end{equation}
the norm estimate
\begin{equation}
\norm{ f \ast g}_{L^{(r,s)}} \leq 3r \norm{f}_{L^{(p_1,q_1)}} \norm{g}_{L^{(p_2,q_2)}}.
\end{equation}
Here $L^{(a,b)}$ denotes the Lorenz spaces (with norm). Moreover the same article states that
\begin{equation}
\norm{f \ast g}_\infty \leq \norm{f}_{L^{(p_1,q_2)}} \norm{g}_{L^{(p_2,q_2)}}
\end{equation}
if $\frac{1}{p_1} + \frac{1}{p_2} = 1$ and $\frac{1}{q_1} + \frac{1}{q_2} \geq 1$.

Note that the Cauchy operators are convolutions with $\frac{1}{\pi z}$, which is in $L^{(2,\infty)}$. Choose $p_1 = 2$, $q_1 = \infty$, $p_2 = q_2 = p*$ and $r = s = p$. This implies the the first claim, because for $1<a\leq \infty$ we have $L^{(a,a)} = L^a$. Then choose $p_1 = 2$, $q_1 = \infty$, $p_2 = 2$, $q_2 = 1$ to get the second claim.
\end{proof}

\begin{definition}
Let $B$ be a Banach space. Then the \emph{space of uniformly continuous $B$-valued functions} is
\begin{equation}
C^0(\wbar{\O}, B) = \{ f:\wbar{\O} \to B \mid \text{$f$ is pointwise continuous at each $z_0 \in \wbar{\O}$} \},
\end{equation}
equipped with the norm $\norm{f}_{C^0(B)} = \sup_{z_0} \norm{f(z_0)}_B$.
\end{definition}
\begin{lemma}[Well-definedness]
\label{wellDf}
If $B$ is a Banach space then $C^0(\wbar{\O},B)$ is a Banach space.
\end{lemma}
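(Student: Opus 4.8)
The plan is to verify the two defining properties of a Banach space: that $C^0(\wbar{\O},B)$ with the given supremum norm is a normed vector space, and that it is complete. The vector space structure and the norm axioms are routine: $\norm{f}_{C^0(B)} = \sup_{z_0}\norm{f(z_0)}_B$ is finite because $\wbar{\O}$ is compact and $z_0 \mapsto \norm{f(z_0)}_B$ is continuous (hence bounded) whenever $f$ is continuous; absolute homogeneity and the triangle inequality are inherited pointwise from the norm on $B$; and $\norm{f}_{C^0(B)} = 0$ forces $f(z_0) = 0$ for every $z_0$. One should also note that the sum of two continuous $B$-valued functions is continuous and scalar multiples of continuous functions are continuous, so the set is genuinely a linear subspace of all bounded maps $\wbar{\O}\to B$.

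The substance is completeness. I would take a Cauchy sequence $(f_k)$ in $C^0(\wbar{\O},B)$. First I would extract a pointwise limit: for each fixed $z_0 \in \wbar{\O}$, the estimate $\norm{f_k(z_0) - f_\ell(z_0)}_B \leq \norm{f_k - f_\ell}_{C^0(B)}$ shows $(f_k(z_0))$ is Cauchy in $B$, which is complete, so it converges to some $f(z_0) \in B$. Next I would upgrade pointwise convergence to uniform convergence: given $\varepsilon > 0$, pick $N$ with $\norm{f_k - f_\ell}_{C^0(B)} < \varepsilon$ for $k,\ell \geq N$; then for every $z_0$ and every $k \geq N$, letting $\ell \to \infty$ in $\norm{f_k(z_0) - f_\ell(z_0)}_B < \varepsilon$ gives $\norm{f_k(z_0) - f(z_0)}_B \leq \varepsilon$, and taking the supremum over $z_0$ yields $\norm{f_k - f}_{C^0(B)} \leq \varepsilon$, i.e. $f_k \to f$ in the $C^0(B)$ norm (and in particular $f$ is bounded).

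The last point — the one I would call the main (if modest) obstacle — is checking that the limit $f$ is itself continuous at every $z_0 \in \wbar{\O}$, so that it actually lies in $C^0(\wbar{\O},B)$ and the space is closed under these limits. This is the standard ``uniform limit of continuous functions is continuous'' argument transplanted to a Banach-space-valued setting: fix $z_0$ and $\varepsilon > 0$, choose $k$ with $\norm{f_k - f}_{C^0(B)} < \varepsilon/3$, then use continuity of $f_k$ at $z_0$ to find $\delta > 0$ so that $\abs{z - z_0} < \delta$ implies $\norm{f_k(z) - f_k(z_0)}_B < \varepsilon/3$, and combine via the triangle inequality
\begin{equation}
\norm{f(z) - f(z_0)}_B \leq \norm{f(z) - f_k(z)}_B + \norm{f_k(z) - f_k(z_0)}_B + \norm{f_k(z_0) - f(z_0)}_B < \varepsilon.
\end{equation}
Hence $f \in C^0(\wbar{\O},B)$ and $f_k \to f$ in norm, so every Cauchy sequence converges in the space, completing the proof.
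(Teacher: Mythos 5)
Your proof is correct and is exactly the standard argument the paper invokes: the paper's proof simply states that the argument is the same as for $C^0(\wbar{\O},\C)$ and refers to elementary functional analysis texts, and your write-up (norm axioms, pointwise Cauchy limits, uniform convergence, continuity of the uniform limit) is precisely that argument carried out for $B$-valued functions. No discrepancies to report.
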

\begin{proof}
The proof is exactly the same as for $C^0(\wbar{\O}, \C)$ and can be found in almost any elementary book on functional analysis.
\end{proof}

Here we construct a test function which we will need for the most important theorem of this section (thm \ref{BIGTHM}).
\begin{lemma}
\label{testF}
Let $0<\delta<1$. Then there exists a function $h \in C^\infty(\wbar{\O}^2)$ which satisfies
\begin{enumerate}
\item $0\leq h \leq 1$
\item $h(z) = 0 \Leftrightarrow \abs{z-z_0} \leq \delta/2$
\item $m(\supp(1-h)) \leq \pi \delta^2$ for all $z_0 \in \wbar{\O}$
\item $\sup_{z_0} \norm{\frac{h}{\wbar{z}-\wbar{z_0}}}_{W^{1,l}(\O)} \leq c_l \delta^{2(\frac{1}{l}-1)}$ for all $1< l < 2$.
\end{enumerate}
\end{lemma}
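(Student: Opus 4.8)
The plan is to take a single fixed radial cutoff and rescale it to width $\delta$. I would fix once and for all a profile $\phi \in C^\infty(\R)$ with $0 \le \phi \le 1$, $\phi(t) = 0$ for $t \le 1/4$, and $\phi(t) = 1$ for $t \ge 1$, and set
\begin{equation}
h(z,z_0) = \phi\!\left(\frac{\abs{z-z_0}^2}{\delta^2}\right).
\end{equation}
Since $\abs{z-z_0}^2 = (z-z_0)(\wbar z - \wbar{z_0})$ is a polynomial in the four real coordinates of $(z,z_0)$, the function $h$ is the restriction to $\wbar\O^2$ of a $C^\infty$ function on $\C^2$, so $h \in C^\infty(\wbar\O^2)$. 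Properties (1)--(3) are then immediate: (1) is $0 \le \phi \le 1$; (2) holds because $\phi$ vanishes exactly on $(-\infty,1/4]$, so $h(z,z_0) = 0 \Leftrightarrow \abs{z-z_0} \le \delta/2$; and (3) holds because $1-h$ is supported where $\phi \ne 1$, hence $\supp(1-h) \subseteq \{\abs{z-z_0} \le \delta\}$, a disc of area $\pi\delta^2$ for every $z_0$.

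The content of the lemma is the Sobolev estimate (4). Fix $z_0 \in \wbar\O$ and $1 < l < 2$, and put $g(z) = h(z,z_0)/(\wbar z - \wbar{z_0})$. Because $h \equiv 0$ on the neighbourhood $\{\abs{z-z_0} < \delta/2\}$ of the only singularity of $1/(\wbar z - \wbar{z_0})$, the quotient $g$ is in fact $C^\infty$ on $\wbar\O$, so its distributional derivatives coincide with the classical ones (the delta mass at $z_0$ in $\db (\wbar z - \wbar{z_0})^{-1}$ never appears). Using that $(\wbar z - \wbar{z_0})^{-1}$ is anti-holomorphic off $z_0$, one gets on $\supp g$
\begin{equation}
\d g = \frac{\d h}{\wbar z - \wbar{z_0}}, \qquad
\db g = \frac{\db h}{\wbar z - \wbar{z_0}} - \frac{h}{(\wbar z - \wbar{z_0})^2}.
\end{equation}
I would bound the $L^l(\O)$ norm of $g$ and of each of these three terms using two elementary facts: (a) $\abs{\d h}, \abs{\db h} \le C_\phi/\delta$, with support in the annulus $\{\delta/2 \le \abs{z-z_0} \le \delta\}$ of area $\le \pi\delta^2$; and (b) $\O \subseteq \{\abs{z-z_0} < 2\}$ for every $z_0 \in \wbar\O$, so the integral of a power $\abs{z-z_0}^{-kl}$ over $\O \cap \supp h$ reduces, in polar coordinates centred at $z_0$, to $2\pi\int_{\delta/2}^{2} r^{1-kl}\,dr$.

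For the two terms carrying a derivative of $h$, fact (a) together with $\abs{\wbar z - \wbar{z_0}} \ge \delta/2$ on the annulus gives an $L^l(\O)$ norm whose $l$-th power is at most $(2C_\phi/\delta^2)^l \pi \delta^2 \le C' \delta^{2-2l}$, i.e. $L^l$ norm $\le C'' \delta^{2(1/l-1)}$. For $g$ itself, $\abs h \le 1$ and fact (b) with $1 - l > -1$ give $\norm{g}_{L^l(\O)}^l \le 2\pi \int_0^2 r^{1-l}\,dr = C_l < \infty$, which is $\le C_l \delta^{2(1/l-1)}$ because $\delta < 1$ and $2(1/l-1) < 0$. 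The only genuinely singular term is $h/(\wbar z - \wbar{z_0})^2$; here $1 - 2l < -1$, so
\begin{equation}
\norm{\frac{h}{(\wbar z - \wbar{z_0})^2}}_{L^l(\O)}^l \le 2\pi \int_{\delta/2}^{2} r^{1-2l}\,dr \le C_l\,\delta^{2-2l},
\end{equation}
again giving $L^l$ norm $\le C_l \delta^{2(1/l-1)}$. Summing the four contributions bounds $\norm{g}_{W^{1,l}(\O)}$ by $c_l \delta^{2(1/l-1)}$ with $c_l$ independent of $z_0$, which is (4). The only real obstacle is bookkeeping the power of $\delta$: the decisive term is $h/(\wbar z - \wbar{z_0})^2$, whose $L^l$ norm is controlled exactly by the lower cutoff radius $\delta/2$, and once one notices that, the exponent $2(1/l-1)$ is forced and no other term does worse. (One could equally take $h = \phi(\abs{z-z_0}/\delta)$ with $\phi$ flat near $0$; this is still smooth since $h \equiv 0$ near the diagonal, and nothing in the estimates changes.)
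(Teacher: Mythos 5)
Your proposal is correct and follows essentially the same route as the paper: a fixed smooth cutoff rescaled to width $\delta$ around $z_0$ (the paper uses $H((z-z_0)/\delta)$, you use $\phi(\abs{z-z_0}^2/\delta^2)$, which is immaterial), followed by the same splitting of the derivative into the $\d h,\db h$ terms and the term $h/(\wbar z-\wbar{z_0})^2$, each estimated in polar coordinates about $z_0$ with the same resulting powers of $\delta$. The bookkeeping, including the observation that the undifferentiated term is bounded by a constant which is absorbed since $\delta^{2(1/l-1)}\ge 1$, matches the paper's proof.
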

\begin{proof}
Let $H\in C^\infty(\C)$ be such that $0 \leq H \leq 1$, $H(z) = 0 \Leftrightarrow \abs{z} \leq \frac{1}{2}$ and $H(z) = 1 \Leftrightarrow \abs{z} \geq 1$. Then define $h(z) = H(\frac{z-z_0}{\delta})_{|\wbar{\O}}$. Now clearly $h \in C^\infty(\wbar{\O}^2)$ and conditions 1, 2 and 3 are satisfied.

Let us calculate the $L^l(\O)$ norms of the function and its derivaties. Keep in mind that $0<\delta<1$ and $1< l$ so
\begin{multline}
\norm{\frac{h}{\wbar{z}-\wbar{z_0}}}_l^l = \int_\O \abs{\frac{H(\frac{z-z_0}{\delta})}{\wbar{z}-\wbar{z_0}}}^l dm(z) \leq \int_{2\O} \abs{\frac{H(z/\delta)}{z}}^l dm(z) \leq \int_{2\O \setminus \frac{\delta}{2}\O} \abs{z}^{-l} dm(z) \\
= 2\pi \int_{\delta/2}^2 r^{1-l} dr = 2\pi \frac{2^{2-l}-(\delta/2)^{2-l}}{2-l} \leq 2\pi \frac{2^{2-l}}{2-l} \leq C_l^l \delta^{2(1 - l)}.
\end{multline}
Then the derivatives. Note that $\partial \frac{h}{\wbar{z}-\wbar{z_0}} = \frac{\partial h}{\wbar{z}-\wbar{z_0}}$ so we only do the calculations for $\db$. They go similarly for $\d$ but with one term less. The first term
\begin{equation}
\begin{split}
\norm{\frac{\db h}{\wbar{z}-\wbar{z_0}}}_l^l &= \int_\O \abs{\frac{\db\big(H(\frac{z-z_0}{\delta})\big)}{\wbar{z}-\wbar{z_0}}}^l dm(z) \leq \int_{2\O} \abs{\frac{\db(H(z/\delta))}{z}}^l dm(z) \\
&\leq 2^l \delta^{-l} \norm{\nabla H}_\infty^l \int_{\delta\O \setminus \frac{\delta}{2}\O} \abs{z}^{-l} dm(z) \leq C_l^l \delta^{-l} \int_{\delta/2}^\delta r^{1-l}dr \\
&\leq C_l^l \delta^{-l} \frac{\delta^{2-l}-(\delta/2)^{2-l}}{2-l} \leq {C'_l}^l \delta^{2(1-l)}.
\end{split}
\end{equation}
And finally the last term. Note that $z_0 \notin \supp h$ so the singularity of $1/(\wbar{z}-\wbar{z_0})$ does not cause problems:
\begin{equation}
\begin{split}
\norm{h \db \frac{1}{\wbar{z}-\wbar{z_0}}}_l^l &= \int_\O \abs{\frac{H(\frac{z-z_0}{\delta})}{(\wbar{z}-\wbar{z_0})^2}}^l dm(z) \leq \int_{2\O} \abs{\frac{H(z/\delta)}{z^2}}^l dm(z) \\
&\leq \int_{2\O \setminus \frac{\delta}{2}\O} \abs{z}^{-2l} dm(z) = 2\pi \int_{\delta/2}^2 r^{1-2l}dr \\
&= \pi \frac{4^{l-1}\delta^{2(1-l)} - 4^{1-l}}{l-1} \leq {C''_l}^l \delta^{2(1-l)},
\end{split}
\end{equation}
so the claim follows by the triangle inequality of $L^l(\O)$.
\end{proof}

Note that the test function of the previous lemma is not supported compactly, so we need to take care of the boundary terms when integrating by parts. This lemma will be used for that.
\begin{lemma}
\label{boundaryInt}
Let $2<p<\infty$, $r > 0$, $z_0 \in \O$ and $n>1$. Then there is $C_r < \infty$ such that for $g \in W^{1,\frac{(2+r)p}{2+p}}(\O)$ we have
\begin{equation}
\norm{\frac{1}{2\pi}\int_{\d\O} \frac{e^{-inR}\Tr g(z')}{z-z'}z'd\sigma(z')}_{L^p(\O)} \leq C_r n^{(1-\frac{2}{(1+r)p})(1-\frac{2}{2+r})}\norm{g}_{W^{1,\frac{(2+r)p}{2p}}(\O)}.
\end{equation}
\end{lemma}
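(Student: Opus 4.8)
The plan is to recognise the left-hand side as a constant multiple of the Cauchy integral on $\d\O$ of the boundary function $e^{-inR}\Tr g$, and then to play the oscillation $e^{-inR}$ off against the regularity of $g$ by interpolation. Writing $z'=e^{i\theta}$ on $\d\O$ one has $z'\,d\sigma(z')=-i\,dz'$, so the operator equals, up to a unimodular constant, $\tfrac{1}{2\pi i}\int_{\d\O}\frac{e^{-inR(z')}\Tr g(z')}{z'-z}\,dz'$, which for $z\in\O$ is holomorphic. Equivalently, since $R$ is real-valued (indeed $R(z)=2\Re\bigl((z-z_0)^2\bigr)$, so $\abs{e^{-inR}}\equiv1$ and $\db(e^{-inR}g)=e^{-inR}\bigl(\db g-2in(\wbar z-\wbar{z_0})g\bigr)$), the Cauchy--Pompeiu formula applied to $e^{-inR}g$ rewrites the operator, up to harmless constants, as $e^{-inR}g+\Ca\bigl(e^{-inR}\db g\bigr)-2in\,\Ca\bigl(e^{-inR}(\wbar z-\wbar{z_0})g\bigr)$ on $\O$.

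The first two terms are controlled with no loss in $n$: trivially for $e^{-inR}g$, and by Lemma \ref{ONeilLemma1} ($\Ca\colon L^{p*}\to L^p$) for $\Ca(e^{-inR}\db g)$, which gives an $L^p(\O)$ bound by $\norm{g}_{W^{1,p*}}$; this is the $r\to0$ endpoint, with exponent $0$. The third term carries an explicit factor $n$ and must be treated using the oscillation. I would integrate by parts in $z'$ via $e^{-inR}=-\bigl(2in(z'-z_0)\bigr)^{-1}\d_{z'}e^{-inR}$ away from the disc $\{\abs{z'-z_0}\lesssim n^{-1/2}\}$ (on that disc one just uses $\abs{e^{-inR}}=1$ together with the $n^{-1/2}$-size of the region), gaining $n^{-1}$ at the price of one further derivative on $g$ and of Beurling-type kernels $\d_{z'}(z-z')^{-1}=(z-z')^{-2}$ as well as the extra factors $(z'-z_0)^{-1}$, $(z'-z_0)^{-2}$; the resulting singular integrals are $L^p$-bounded, and the new boundary contributions are of the same type and are estimated by the trace theorem. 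This produces a second bound of the form $\norm{\cdot}_p\lesssim n^{a}\norm{g}_{W^{1,b}}$ with $a<1$. Complex interpolation between this and the $r\to0$ endpoint, as $r$ runs over the scale, should yield the stated exponent $\bigl(1-\tfrac{2}{(1+r)p}\bigr)\bigl(1-\tfrac{2}{2+r}\bigr)$, with $\tfrac{2}{2+r}$ playing the role of the interpolation parameter and $\tfrac{2}{(1+r)p}$ that of the accompanying shift in the trace and Cauchy-integral exponents.

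The main obstacle is the oscillatory Cauchy transform $\Ca(e^{-inR}h)$: the restricted phase $\theta\mapsto R(e^{i\theta})=2\Re\bigl((e^{i\theta}-z_0)^2\bigr)$ can have degenerate critical points for special $z_0\in\O$, so one cannot invoke stationary phase uniformly in $z_0$, and the auxiliary kernels produced by the integration by parts are singular precisely at the interior point $z_0$; reconciling all the exponents along the chain --- trace embedding $W^{1,s}(\O)\hookrightarrow L^{s^*}(\d\O)$, Cauchy integral $L^{s^*}(\d\O)\to L^p(\O)$, and the $n$-loss from de-oscillating --- is what makes the bookkeeping heavy. As noted in the forewords, this is exactly the step one would like to streamline via interpolation between Lorentz and BMO spaces on $\O$.
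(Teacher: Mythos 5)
Your decomposition via Cauchy--Pompeiu is a correct identity, and the first two terms are indeed harmless, but the proof stands or falls on the term $2n\,\Ca\bigl(e^{-inR}(\wbar z-\wbar{z_0})g\bigr)$, and that term is never actually estimated: you only sketch an integration by parts away from a disc of radius $\sim n^{-1/2}$ around $z_0$, assert a bound $n^{a}\norm{g}_{W^{1,b}}$ with unspecified $a<1$, $b$, and then say interpolation ``should'' produce the exponent $(1-\frac{2}{(1+r)p})(1-\frac{2}{2+r})$. This is exactly the hard part, and your own closing paragraph concedes the obstacles (non-uniformity in $z_0$, the non-locally-integrable kernels $(z-z')^{-2}$ and $(z'-z_0)^{-2}$ appearing together, the cutoff boundary terms), none of which are resolved. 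There is also a structural problem: in the paper this lemma exists precisely to dispose of the boundary term created when Theorem \ref{BIGTHM} integrates by parts; by applying Cauchy--Pompeiu you convert the boundary integral back into an interior oscillatory Cauchy transform of the very type Theorem \ref{BIGTHM} is trying to control, so you would have to redo (inside the lemma, and uniformly in $z_0$) the stationary-phase machinery that the lemma is meant to feed into --- at best a duplication, at worst a circle.

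The gap is made more visible by the fact that no stationary phase is needed here at all. The paper's proof gets the sublinear growth in $n$ by a much softer mechanism: (i) a low endpoint, in $L^{q}(\O)$ for some $q<2$, with \emph{no} loss in $n$, obtained from $\abs{e^{-inR}}=1$, Minkowski's integral inequality using $\norm{(z-z')^{-1}}_{L^{q}(\O,z)}\leq c_r$ uniformly in $z'\in\d\O$, and the trace bound $\int_{\d\O}\abs{\Tr g}\,d\sigma\lesssim\norm{g}_{W^{1,1}(\O)}$; (ii) a high endpoint, in $L^\infty(\O)$, from Vekua's theorem that the boundary Cauchy integral maps $C^\alpha(\d\O)$ boundedly into $C^\alpha(\wbar\O)$, combined with the Sobolev embedding $W^{1,2+r}(\O)\subset C^{1-\frac{2}{2+r}}(\wbar\O)$ and the elementary estimate $\norm{e^{inR}}_{C^\alpha(\wbar\O)}\lesssim n^\alpha$, which yields the factor $n^{1-\frac{2}{2+r}}$ without any critical-point analysis and uniformly in $z_0$; then real interpolation with parameter $\theta=1-\frac{2}{(1+r)p}$ gives the $L^p$ bound with exponent $\theta\bigl(1-\frac{2}{2+r}\bigr)$ and the source space $W^{1,\frac{(2+r)p}{2+p}}$. (Note in particular that in the stated exponent the factor $1-\frac{2}{(1+r)p}$ is the interpolation parameter and $1-\frac{2}{2+r}$ is the H\"older exponent of the high endpoint, not the pairing you guessed.) To repair your argument you would either have to carry out the excision/integration-by-parts estimate in full, uniformly in $z_0$ and with explicit exponents, or abandon the Cauchy--Pompeiu reduction and exploit the oscillation only through the H\"older norm of $e^{inR}$ on the boundary, as above.
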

\begin{proof}
We prove the claim by interpolation. Note that $\norm{\frac{1}{z-z'}}_{L^{2/(1+r)}(\O,z)} \leq \norm{z^{-1}}_{L^{2/(1+r)}(2\O,z)} < c_r < \infty$. Thus by Minkowski's integral inequality we have
\begin{equation}
\norm{\frac{1}{2\pi} \int_{\d\O} \frac{e^{-inR}\Tr g(z')}{z-z'}z'd\sigma(z')}_{L^{\frac{2}{2+r}}(\O)} \!\!\!\!\!\!\! \leq \frac{c_r}{2\pi} \int_{\d\O} \!\!\! \abs{\Tr g(z')} d\sigma(z') \leq c_r^1 \norm{g}_{W^{1,1}(\O)}.
\end{equation}
We have the Sobolev embedding $W^{1,2+r} (\O) \subset C^{1-\frac{2}{2+r}}(\wbar{\O})$ so by \cite[thm 1.10]{vekua} and $\norm{e^{inR}}_{C^\alpha(\wbar{\O})} \leq 11 n^\alpha$ we get
\begin{multline}
\norm{\frac{1}{2\pi} \int_{\d\O} \frac{e^{-inR}\Tr g(z')}{z-z'}z'd\sigma(z')}_{L^\infty(\O)} \!\!\!\!\!\!\! \leq c_r^2 \norm{e^{inR} \Tr g}_{C^{1-\frac{2}{2+r}}(\wbar{\d\O})} \\
\leq c_r^2 \norm{e^{inR} g}_{C^{1-\frac{2}{2+r}}(\wbar{\O})} \leq c^3_r n^{1-\frac{2}{2+r}}\norm{g}_{W^{1,2+r}(\O)}.
\end{multline}
The next step is to use real interpolation $(\cdot,\cdot)_{(\theta,q)}$ with $\theta = 1-\frac{2}{(1+r)p} \in ]0,1[$ and $q^{-1} = (1-\theta)/1 + \theta/(2+r)$. Notice that
\begin{equation}
\begin{split}
&\frac{1-\theta}{\frac{2}{1+r}} + \frac{\theta}{\infty} = \frac{1}{p} \\
&\frac{1-\theta}{1} + \frac{\theta}{2+r} = \frac{2}{(1+r)p} + \frac{1}{2+r} - \frac{2}{(2+r)(1+r)p} \\
&\phantom{===}= \frac{4 + 2r + p + r p - 2}{(2+r)(1+r)p} = \frac{(1+r)(2+p)}{(2+r)(1+r)p} = \frac{2+p}{(2+r)p}.
\end{split}
\end{equation}
We use \cite[thm 6.4.5 (5)]{BL} combined with \cite[thm 6.4.2]{BL} to get the result for $\O$. These imply the claim.
\end{proof}

\begin{theorem}
\label{BIGTHM}
Let $2<p<\infty$, $\frac{1}{p*} = \frac{1}{p} + \frac{1}{2}$, 
$r > 0$. Then there is $C_{r,p} < \infty$ such that if $n>1$ then
\begin{equation}
\begin{split}
\Ca(e^{-inR} &\cdot) : C^0(W^{1,p}) \to C^0(L^p),\\
&\sup_{z_0} \norm{\Ca(e^{-inR}a)}_p \leq C_{r,p} n^{r-\frac{1}{p*}} \sup_{z_0} \norm{a}_{1,p}
\end{split}
\end{equation}
and
\begin{equation}
\begin{split}
\Ca(e^{-inR} &\cdot) : C^0(W^{1,p}) \to C^0(L^\infty),\\
&\sup_{z_0} \norm{\Ca(e^{-inR}a)}_\infty \leq C_p n^{-\frac{1}{5}} \sup_{z_0} \norm{a}_{1,p}.
\end{split}
\end{equation}
\end{theorem}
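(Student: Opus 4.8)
The plan is to reduce the operator $\Ca(e^{-inR}\cdot)$ on a $z_0$-dependent test function $a$ to the difference of two pieces: one where we replace $a$ by $ha$ using the cutoff $h$ from Lemma~\ref{testF}, and the complementary piece supported in a disc of radius $\delta$ around $z_0$. On the complementary piece we do not integrate by parts; we simply use Lemma~\ref{ONeilLemma1} together with H\"older in a small disc, so that the $L^{p*}$ norm of $e^{-inR}(1-h)a$ over a set of measure $\lesssim\delta^2$ is controlled by $\delta$ to a positive power times $\norm{a}_{1,p}$ (using that $W^{1,p}\hookrightarrow L^\infty$ for $p>2$, so $a$ is bounded). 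This contributes a term with no growth in $n$ but a gain of a power of $\delta$. On the main piece $ha$, the point of the cutoff is that $ha/(\wbar z-\wbar{z_0})$ is a genuine $W^{1,l}$ function with the norm bound in item~4 of Lemma~\ref{testF}, so we may write $e^{-inR}=\frac{1}{-4in}\,\db\!\big(\text{something involving }\frac{1}{\wbar z-\wbar{z_0}}\big)\cdot(\dots)$ — more precisely use that $\db e^{-inR} = -2in(\wbar z-\wbar{z_0})e^{-inR}$, hence $e^{-inR}ha = \frac{-1}{2in}\,\frac{ha}{\wbar z-\wbar{z_0}}\,\db e^{-inR}$ — and integrate by parts in the $\Ca$-convolution. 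This is where the boundary term from Lemma~\ref{boundaryInt} enters, since $h$ is not compactly supported in $\O$.

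The second step is to carry out that integration by parts carefully. After moving $\db$ off $e^{-inR}$, we get $\Ca(e^{-inR}ha) = \frac{1}{2in}\Big[\Ca\big(e^{-inR}\,\db(\tfrac{ha}{\wbar z-\wbar{z_0}})\big) - (\text{boundary integral})\Big]$, using that $\db\Ca$ and $\Ca\db$ differ by the identity/boundary terms on the disc (Cauchy--Pompeiu). The interior term is then estimated by: pull out the $\frac{1}{2in}$ factor; apply the first bound of Lemma~\ref{ONeilLemma1} to bound $\norm{\Ca(\cdot)}_p$ by the $L^{p*}$ norm; bound $\norm{e^{-inR}\,\db(\tfrac{ha}{\wbar z-\wbar{z_0}})}_{p*}$ by H\"older using $\db(\tfrac{ha}{\wbar z-\wbar{z_0}}) = \tfrac{(\db h)a + h\,\db a}{\wbar z-\wbar{z_0}} + ha\,\db\tfrac{1}{\wbar z-\wbar{z_0}}$, distributing the derivative onto $a$ (cost: $\norm{a}_{1,p}$) and onto the cutoff factor (cost: the $W^{1,l}$ bound $\delta^{2(1/l-1)}$ from item~4 with $l$ chosen so that H\"older with $\norm{a}_p$ or $\norm{\nabla a}_p$ lands in $L^{p*}$). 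The boundary integral is handled by Lemma~\ref{boundaryInt} with $g = ha$ (note $ha\in W^{1,p}$, and we need $p\ge$ the relevant index $\tfrac{(2+r)p}{2+p}$, which holds), giving a factor $n^{(1-\frac{2}{(1+r)p})(1-\frac{2}{2+r})}$, and this over-all gets divided by $n$ from the integration-by-parts prefactor.

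The third step is bookkeeping of exponents: collect the $n$-power $n^{-1+(1-\frac{2}{(1+r)p})(1-\frac{2}{2+r})}$ from the boundary term and the $n^{-1}\delta^{\text{neg power}}$ from the interior term, balance against the $\delta^{\text{pos power}}$ from the complementary piece by optimizing $\delta$ as a negative power of $n$, and verify the resulting exponent is $\le r - \frac{1}{p*}$ for the $L^p$ bound (and $\le -\frac{1}{5}$ for the $L^\infty$ bound, where in the $L^\infty$ estimate one instead uses the second bound in Lemma~\ref{ONeilLemma1}, i.e.\ the $L^{(2,1)}(\O)\to L^\infty$ mapping, together with the $L^{(2,1)}$-version of the H\"older/embedding estimates, and correspondingly a fixed choice like $r$ small and $p$ fixed making the arithmetic come out to $1/5$). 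The continuity in $z_0$ of the resulting map into $C^0(L^p)$ (resp.\ $C^0(L^\infty)$) follows from dominated convergence once the uniform bounds are in place, since $R$, $h$ and $a$ all depend continuously on $z_0$.

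The main obstacle will be the exponent arithmetic in the third step — in particular checking that the boundary term's growth $n^{-1+(1-\frac{2}{(1+r)p})(1-\frac{2}{2+r})}$, which is a power of $n$ strictly between $-1$ and $0$, stays below the target $r-\frac{1}{p*} = r - \frac12 - \frac1p$ after the $\delta$-optimization is folded in, and similarly that the $L^\infty$ side genuinely reaches $n^{-1/5}$; the choice of the free parameter $r$ in Lemma~\ref{boundaryInt} (which here is an internal parameter, distinct from the $\varepsilon$ of the potentials) has to be tuned against $\delta$. The analytic content — integration by parts, O'Neil's convolution inequalities, the Vekua boundary estimate — is all already in place from the preceding lemmas; the work is in orchestrating them so that every term has a negative (or sufficiently small) power of $n$.
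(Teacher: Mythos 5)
Your plan for the first ($L^p$) estimate is essentially the paper's: split $a=(1-h)a+ha$ with the cutoff of Lemma \ref{testF}, bound the $(1-h)a$ piece by $\norm{1-h}_{p*}\norm{a}_\infty\lesssim\delta^{2/p*}\norm{a}_{1,p}$, integrate the $ha$ piece by parts via Cauchy--Pompeiu, and treat the boundary contribution with Lemma \ref{boundaryInt}, finally taking $\delta$ a negative power of $n$ ($\delta=n^{-1/2}$ in the paper). But your bookkeeping feeds the wrong function into Lemma \ref{boundaryInt}: after writing $e^{-inR}ha=\tfrac{-1}{2in}\tfrac{ha}{\wbar z-\wbar{z_0}}\,\db e^{-inR}$ and integrating by parts, the trace in the boundary integral is that of $g=\tfrac{ha}{\wbar z-\wbar{z_0}}$, not of $ha$. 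That $g$ lies only in $W^{1,l}$ for $l<2$, with $\norm{g}_{1,l}\lesssim\delta^{2(1/l-1)}\norm{a}_{1,p}$ blowing up as $\delta\to0$ (Lemma \ref{testF}, item 4); your aside that ``$ha\in W^{1,p}$, and we need $p\ge\tfrac{(2+r)p}{2+p}$, which holds'' erases precisely the $\delta$-loss that makes the exponent arithmetic delicate. The paper closes it by choosing an internal parameter $r'$ small, $l=\tfrac{(2+r')p}{2+p}\in\,]1,2[$, so that $n^{-1}\,n^{(1-\frac{2}{(1+r')p})(1-\frac{2}{2+r'})}\,\delta^{2(1/l-1)}\le n^{r-\frac{1}{p*}}$ with $\delta=n^{-1/2}$; the computation you propose to verify is not the one that actually has to close. (Also, your displayed identity omits the evaluation term $\tfrac{-1}{2in}e^{-inR}\tfrac{ha}{\wbar z-\wbar{z_0}}$ produced by Cauchy--Pompeiu; it is harmless but must be carried, and the paper does.)

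The genuine gap is the second ($L^\infty$) estimate. Lemma \ref{boundaryInt} only bounds the boundary integral in $L^p(\O)$, and the cutoff of Lemma \ref{testF} does not vanish on $\d\O$, so in your scheme the boundary term survives and you have no $L^\infty$ control of it: ``the $L^{(2,1)}$-version of the H\"older/embedding estimates plus tuning $r$'' does not supply one, since the natural $L^\infty$ endpoint (the Vekua/H\"older-trace bound used inside Lemma \ref{boundaryInt}) requires $g\in W^{1,2+r}$, a norm of $\tfrac{ha}{\wbar z-\wbar{z_0}}$ that Lemma \ref{testF} does not control (it treats only $1<l<2$), and $z_0$ may lie on $\d\O$. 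The paper avoids the issue by a different device: for the $L^\infty$ bound it uses a modified cutoff vanishing both on $\{\abs{z-z_0}\le\delta/2\}$ and on the collar $\{\abs{z}\ge1-\delta/2\}$, so $ha$ is compactly supported in $\O$ and there is no boundary term at all; the price is $m(\supp(1-h))\lesssim\delta$ rather than $\delta^2$, and one then uses $\norm{h/(\wbar z-\wbar{z_0})}_{C^0}\lesssim\delta^{-1}$, $\norm{h/(\wbar z-\wbar{z_0})}_{C^1}\lesssim\delta^{-2}$, the $L^{(2,1)}\to L^\infty$ bound on the $(1-h)a$ piece and $\Ca:L^p\to L^\infty$ on the rest, giving $\delta^{1/2}+n^{-1}\delta^{-2}$ and, with $\delta=n^{-2/5}$, exactly $n^{-1/5}$. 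Without some such modification (or a new $L^\infty$ analogue of Lemma \ref{boundaryInt}) your $L^\infty$ half does not go through as written. A minor further point: for membership in $C^0(\wbar\O,L^\infty)$ the paper proves continuity in $z_0$ quantitatively via $\norm{e^{-inR(\cdot,z_0)}-e^{-inR(\cdot,z_0')}}_\infty\lesssim n\abs{z_0-z_0'}$ together with continuity of $z_0\mapsto a_{z_0}$ in $W^{1,p}$; an appeal to dominated convergence alone is thin for the sup-norm target.
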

\begin{proof}
It is enough to prove the continuity of the map $z_0 \mapsto \Ca(e^{-inR(z,z_0)}a_{z_0})$ between the spaces $\O \to L^\infty$ because $L^\infty\subset L^p$. Let $\epsilon > 0$ and take $\delta >0$ such that $\norm{a_{z_0}-a_{z_0'}}_{1,p} < \frac{\epsilon}{2c_p}$ when $\abs{z_0-z_0'} < \delta$, where $c_p < \infty$ is the norm of $\Ca : L^p\to L^\infty$. Now
\begin{equation}
\begin{split}
\Big\lVert \Ca(e&^{-inR(z,z_0)}a_{z_0}) - \Ca(e^{-inR(z,z_0')}a_{z_0'}) \Big\rVert _\infty \\
&\leq c_p \norm{e^{-inR(z,z_0)}a_{z_0}(z) - e^{-inR(z,z_0')}a_{z_0'}(z)}_p \\
&\leq c_p \norm{e^{-inR(z,z_0)}a_{z_0}(z) - e^{-inR(z,z_0')}a_{z_0}(z)}_p \\
& \phantom{\leq}+ c_p \norm{e^{-inR(z,z_0')}a_{z_0}(z) - e^{-inR(z,z_0')}a_{z_0'}(z)}_p \\
&\leq c_p\norm{e^{-inR(z,z_0)}-e^{-inR(z,z_0')}}_\infty \norm{a_{z_0}}_p + c_p\norm{a_{z_0}-a_{z_0'}}_p \\
&\leq c_p \sup_{z\in\O} \norm{e^{-inR}}_{C^1(\wbar{\O},z_0)} \abs{z_0 - z_0'} \norm{a_{z_0}}_p +c_p \norm{a_{z_0} - a_{z_0'}}_p \\
&\leq 11 n c_p  \abs{z_0 - z_0'} \norm{a_{z_0}}_p + \tfrac{\epsilon}{2}< \epsilon,
\end{split}
\end{equation}
if $\abs{z_0 - z_0'} < \frac{\epsilon}{22 n c_p \norm{a_{z_0}}_p}$ and $\abs{z_0-z_0'} < \delta$. Thus it is continuous at $z_0$.

Note the following integration by parts formula: if $f \in W^{1,1}(\O)$, $z_0 \notin \supp f$ then almost everywhere
\begin{equation}
f(z) = \frac{1}{2\pi} \int_{\d\O} \frac{\Tr f (z')}{z'-z} z' d\sigma(z') + \frac{1}{\pi} \int_\O \frac{\db f(z')}{z-z'} dm(z').
\end{equation}
If $z_0 \notin \supp g$, $g\in W^{1,1}(\O)$ put $f(z) = \frac{e^{-inR}}{-2in(\wbar{z}-\wbar{z_0})}g(z)$ to get
\begin{multline}
\Ca(e^{-inR}g) = \frac{-1}{2in}\Big( e^{-inR} \frac{g}{\wbar{z}-\wbar{z_0}} - \Ca\big( e^{-inR} \db \frac{g}{\wbar{z}-\wbar{z_0}}\big) \\
+ \frac{1}{2\pi} \int_{\partial \O} \frac{e^{-inR}g(z')}{(z-z')(\wbar{z}' - \wbar{z_0})}z' d\sigma(z')\Big).
\end{multline}

The first estimate: The $h$ as in lemma \ref{testF} with $\delta = n^{-\frac{1}{2}}$. Then put $g = ha$ to get
\begin{multline}
\Ca(e^{-inR}a) =  \Ca(e^{-inR}(1-h)a) - \frac{1}{2in}\Big( e^{-inR} \frac{ha}{\wbar{z}-\wbar{z_0}} - \Ca\big( e^{-inR} \db \frac{h a}{\wbar{z}-\wbar{z_0}}\big) \\
+ \frac{1}{2\pi} \int_{\partial \O} \frac{e^{-inR}a(z')h(z')/(\wbar{z}' - \wbar{z_0})}{z-z'}z' d\sigma(z')\Big).
\end{multline}
Next take $0 < r' < \frac{4}{p}$ so small that $(1-\frac{2}{(1+r')p})(1-\frac{2}{2+r'}) - \frac{2+p}{(2+r')p} \leq r - \frac{1}{p*}$. This is possible because $\frac{2+p}{2p} = \frac{1}{p*}$, $r>0$ and the left hand side is continuous. Note that $l := \frac{(2+r')p}{2+p} \in ]1,2[$ so we may use lemma \ref{testF}. Note the fact that $\norm{AB}_{1,t} \leq c_t \norm{A}_{1,t} \norm{B}_{1,p}$ for $1\leq t \leq 2$. Keep also in mind that by lemma \ref{ONeilLemma1} we have $\Ca:L^{p*}(\O) \to L^p(\O)$.
And finally using lemma \ref{boundaryInt} and lemma \ref{testF} on the terms with $h$ we get
\begin{equation}
\begin{split}
\big\lVert&\Ca(e^{-inR}a)\big\rVert_p \leq C^1_p\Bigg( \norm{(1-h)a}_{p*} + n^{-1} \Big(\norm{\frac{ha}{\wbar{z}-\wbar{z_0}}}_p + \norm{\frac{ha}{\wbar{z}-\wbar{z_0}}}_{1,p*} \\
&\phantom{\leq} + n^{(1-\frac{2}{(1+r')p})(1-\frac{2}{2+r'})} \norm{\frac{h}{\wbar{z}-\wbar{z_0}} a }_{1,l} \Big)\Bigg) \\
&\leq C^2_{r,p}\Big( \norm{1-h}_{p*} \norm{a}_\infty + n^{-1}\Big(\norm{\frac{h}{\wbar{z}-\wbar{z_0}}}_p \norm{a}_\infty + \norm{\frac{h}{\wbar{z}-\wbar{z_0}}}_{1,p*} \norm{a}_{1,p} \\
&\phantom{\leq} + n^{(1-\frac{2}{(1+r')p})(1-\frac{2}{2+r'})} \norm{\frac{h}{\wbar{z}-\wbar{z_0}}}_{1,l} \norm{a}_{1,p}\Big)\Big) \\
&\leq C^3_{r,p}\Big( \norm{1-h}_{p*} + n^{-1}\Big(\norm{\frac{h}{\wbar{z}-\wbar{z_0}}}_{1,p*}  \!\!\!\!\!\!+ n^{(1-\frac{2}{(1+r')p})(1-\frac{2}{2+r'})} \norm{\frac{h}{\wbar{z}-\wbar{z_0}}}_{1,l}\Big)\Big) \norm{a}_{1,p} \\
&\leq C^4_{r,p}\big(\delta^{2/p*} + n^{-1}(\delta^{2(1/p*-1)} + n^{(1-\frac{2}{(1+r')p})(1-\frac{2}{2+r'})}\delta^{2(1/l-1)})\big) \norm{a}_{1,p} \\
&= C^4_{r,p} \big(n^{-1/p*} + n^{-1}(n^{1-1/p*} + n^{(1-\frac{2}{(1+r')p})(1-\frac{2}{2+r'}) + 1 - \frac{2+p}{(2+r')p}} )\big) \norm{a}_{1,p} \\
&\leq C^5_{r,p} ( n^{-1/p*} + n^{r - 1/p*}) \norm{a}_{1,p} \leq 2C_{r,p}^6 n^{r-1/p*} \norm{a}_{1,p}.
\end{split}
\end{equation}
The claim follows since $C^6_{r,p}$ does not depend on $z_0$.

\bigskip
The second estimate: For $\delta \in ]0,1[$ and $z_0 \in \O$ take $h\in C^\infty_0(\wbar{\O})$ such that it is continuous with respect to $z_0$ and
\begin{enumerate}
\item $0\leq h \leq 1$,
\item $h(z) = 0 \Leftrightarrow \abs{z-z_0} \leq \delta/2 \text{ or } \abs{z} \geq 1 - \delta/2$,
\item $h(z) = 1 \Leftrightarrow \abs{z-z_0} \geq \delta \text{ and } \abs{z} \leq 1 - \delta$,
\item $m(\supp(1-h)) \leq 2\pi \delta$,
\item $\sup_{z_0} \norm{\frac{h}{\wbar{z}-\wbar{z_0}}}_{C^0} \leq c \delta^{-1}$, $\sup_{z_0} \norm{\frac{h}{\wbar{z}-\wbar{z_0}}}_{C^1} \leq c \delta^{-2}$.
\end{enumerate}
This kind of test function exists by the construction of \cite[5.3.2]{lis}. Now integrate $g = ha$ by parts to get
\begin{equation}
\Ca(e^{-inR}a) = \Ca\big(e^{-inR}(1-h)a\big) - \frac{1}{2in}\Big( e^{-inR} \frac{ha}{\wbar{z}-\wbar{z_0}} - \Ca\big( e^{-inR} \db \frac{ha}{\wbar{z}-\wbar{z_0}} \big)\Big).
\end{equation}
We don't need a very sharp bound here. Use the fact that $\Ca : L^{(2,1)} \to L^\infty$ on the first term and $\Ca:L^p\to L^\infty$ on the second term with $\Ca$. Moreover notice that $\norm{e^{inR}(1-h)a}_{(2,1)} \leq c \norm{\chi_{\supp 1-h}}_{(2,1)} \norm{a}_\infty \leq c' m(\supp 1-h)^{1/2} \norm{a}_{1,p}$. Thus
\begin{equation}
\begin{split}
\norm{\Ca(e^{-inR}a)}_\infty &\leq C'_p\Big( \norm{\chi_{\supp1-h}}_{(2,1)} \norm{a}_\infty + n^{-1} \norm{\frac{h}{\wbar{z}-\wbar{z_0}}}_{C^0} \norm{a}_\infty \\
&\phantom{\leq} + n^{-1} \norm{\frac{ha}{\wbar{z}-\wbar{z_0}}}_{1,p} \Big)\\
&\leq C''_p ( \delta^{1/2} + n^{-1}\delta^{-1} + n^{-1}\norm{\frac{h}{\wbar{z}-\wbar{z_0}}}_{C^1}) \norm{a}_{1,p}\\
&\leq C'''_p ( \delta^{1/2} + n^{-1}\delta^{-2}) \norm{a}_{1,p}.
\end{split}
\end{equation}
Then choose $\delta = n^{-\frac{1}{2+1/2}}$ to get $\delta^{1/2} = n^{-1}\delta^{-2} = n^{-1/5}$. The claim follows because the coefficients do not depend on $z_0$.
\end{proof}

Next we will start to use interpolation more seriously. We use notations and definitions from \cite{BL}.

\begin{definition}
\label{functorReq}
Let $F_\theta$ denote any exact interpolation functor of exponent $\theta \in [0,1]$ in the category of Banach spaces such that $F_0(A,B) = A$, $F_1(A,B) = B$ and which satisfies multilinear interpolation. That is if $(A_0^{(j)},A_1^{(j)})$, $j=1,\ldots, m$, and $(B_0,B_1)$ are compatible Banach couples and $T$ is any multilinear bounded mapping satisfying
\begin{equation}
\begin{cases}
T:A_0^{(1)} \oplus \ldots \oplus A_0^{(m)} \to B_0 \text{ with norm } M_0,\\
T:A_1^{(1)} \oplus \ldots \oplus A_1^{(m)} \to B_1 \text{ with norm } M_1,
\end{cases}
\end{equation}
then
\begin{equation}
T:F_\theta(A_0^{(1)}, A_1^{(1)}) \oplus \ldots \oplus F_\theta(A_0^{(m)}, A_1^{(m)}) \to F_\theta(B_0, B_1),
\end{equation}
with norm at most $M_0^{1-\theta}M_1^\theta$.
\end{definition}

\begin{lemma}
\label{functorExist}
The complex interpolation $(\cdot,\cdot)_{[\theta]}$, $0 < \theta < 1$, the real interpolation $(\cdot,\cdot)_{\theta,1}$, $0 < \theta < 1$ and the trivial ones $(A,B)_0 = A$, $(A,B)_1 = B$ satisfy the requirements in definition \ref{functorReq}.
\end{lemma}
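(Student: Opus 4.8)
The plan is to verify the three required properties for each of the three families of interpolation functors separately, since Definition~\ref{functorReq} asks for exactness of exponent $\theta$, the correct endpoint behaviour $F_0(A,B)=A$, $F_1(A,B)=B$, and a multilinear interpolation estimate with constant $M_0^{1-\theta}M_1^\theta$. First I would dispose of the trivial functors $(\cdot,\cdot)_0$ and $(\cdot,\cdot)_1$: here the statement is immediate, since $F_\theta(A,B)$ equals $A$ or $B$ on the nose, the map $T$ restricted to the relevant endpoint already has norm $M_0$ or $M_1$, and for $\theta=0$ we have $M_0^{1-\theta}M_1^\theta = M_0$ (and symmetrically for $\theta=1$), so there is nothing to prove beyond unwinding definitions.

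For the complex method $(\cdot,\cdot)_{[\theta]}$, the endpoint conditions $(A,B)_{[0]}=A$ and $(A,B)_{[1]}=B$ are standard (\cite[Thm.~4.2.1 and 4.2.2]{BL} or the analogous statement there), and exactness of exponent $\theta$ is part of the basic theory of the complex method. The multilinear interpolation estimate is the one genuinely substantial point: for the bilinear/multilinear complex method this is a classical theorem of Calder\'on; I would cite \cite[Thm.~4.4.1]{BL}, which states precisely that a multilinear map bounded $A_0^{(1)}\times\cdots\times A_0^{(m)}\to B_0$ with norm $M_0$ and $A_1^{(1)}\times\cdots\times A_1^{(m)}\to B_1$ with norm $M_1$ extends to the complex interpolation spaces with norm at most $M_0^{1-\theta}M_1^\theta$. (The proof idea there: given $a_j$ in the intersection spaces with small norm in $F_\theta(A_0^{(j)},A_1^{(j)})$, pick analytic functions $f_j(w)$ on the strip with $f_j(\theta)=a_j$ and controlled boundary values, form the analytic function $w\mapsto e^{\delta w^2 - \delta\theta^2}\,T(f_1(w),\ldots,f_m(w))\in B_0+B_1$, and apply the three-lines lemma in the couple $(B_0,B_1)$.)

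For the real method $(\cdot,\cdot)_{\theta,1}$, the endpoint normalization is a definitional/limiting convention, exactness of exponent $\theta$ is again standard, and the multilinear estimate follows from the multilinear version of the real interpolation theorem, e.g. \cite[Thm.~3.13.5(b)]{BL} or the discussion of multilinear interpolation for the $K$-method; alternatively one can deduce the case $q=1$ from the reiteration theorem together with the complex-method result. I would simply cite the appropriate theorem in \cite{BL} rather than reproduce the $K$-functional computation, whose essence is that $K(t,T(a_1,\ldots,a_m);B_0,B_1) \lesssim \prod_j K(t^{1/m}\cdot(\text{scaling}),a_j;A_0^{(j)},A_1^{(j)})$ after an appropriate change of variables, and then one integrates.

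The main obstacle is really a bookkeeping one: making sure that the multilinear interpolation statements available in \cite{BL} are quoted in exactly the homogeneity-$M_0^{1-\theta}M_1^\theta$ form demanded by Definition~\ref{functorReq}, and that the $m$ distinct source couples $(A_0^{(j)},A_1^{(j)})$ (rather than a single common couple) are genuinely covered. For the complex method this is literally Calder\'on's theorem as in \cite[Thm.~4.4.1]{BL}; for the real method with $q=1$ one must be slightly careful, but the reiteration argument $(\cdot,\cdot)_{\theta,1} = \big((\cdot,\cdot)_{[\theta_0]},(\cdot,\cdot)_{[\theta_1]}\big)_{\eta,1}$ together with the complex-method estimate and the standard behaviour of the real method under composition closes the gap. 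No step requires a new idea, so the proof is a short verification once the right references are in hand.
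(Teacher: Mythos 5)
Your proposal is correct and follows essentially the same route as the paper: handle the trivial endpoint functors by inspection and quote Bergh--L\"ofstr\"om for the two methods, namely exactness of exponent $\theta$ plus Calder\'on's multilinear theorem (\cite[Thm.~4.4.1]{BL}) for the complex method and the Lions--Peetre multilinear interpolation result for the real method with $q=1$ (the exercise in Chapter 3 of \cite{BL}, which is exactly what makes the choice $q=1$ work). One small caveat: your aside that $(A,B)_{[0]}=A$ and $(A,B)_{[1]}=B$ for the complex method is not literally true (one only gets closures of $A\cap B$ in the endpoint spaces), but this is harmless here since the endpoint cases $\theta=0,1$ are covered by the trivial functors, exactly as in the paper.
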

\begin{proof}
The trivial ones clearly satisfy the claims. Complex interpolation satisfies them by \cite[thms 4.1.2, 4.4.1]{BL}. Real interpolation with $q=1$ satisfies them by \cite[thms 3.1.2, 3.2.2, 3.3.1, 3.4.2, ex 3.5(a)]{BL}.
\end{proof}

\begin{remark}
To conserve space we write $X_\theta = F_\theta\big(C^0(\wbar{\O},L^p), C^0(\wbar{\O},W^{1,p})\big)$ and $A_\theta = F_\theta(L^p,W^{1,p})$. Also denote $X'_\theta = F_\theta\big(C^0(\wbar{\O},L^\infty), C^0(\wbar{\O},W^{1,p})\big)$. These are well defined, because $W^{1,p}(\O), L^\infty(\O), L^p(\O) \subset L^1(\O)$, which is a Hausdorff space. It is assumed that $z$ is the variable of the Sobolev space and $z_0$ the one of the continous functions.
\end{remark}

\begin{lemma}
\label{noZ0}
Let $0\leq \theta \leq 1$. If $f\in A_\theta$ then $\lVert\tilde{f}\rVert_{X_\theta} = \norm{f}_{A_\theta}$, where $\tilde{f}(z_0) = f$ for all $z_0 \in \wbar{\O}$.
\end{lemma}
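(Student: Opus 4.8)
The plan is to realise $f\mapsto\tilde f$, together with a one-sided inverse of it, as restrictions of norm-one couple morphisms between the endpoint spaces, and then let exactness of $F_\theta$ do the work; no explicit description of the $A_\theta$- or $X_\theta$-norm (via $K$-functionals, complex interpolation, etc.) will be needed.

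First I would consider the constant embedding $\iota$, which sends a function to the constant $\wbar{\O}$-valued function with that value. It defines bounded linear maps $\iota:L^p\to C^0(\wbar{\O},L^p)$ and $\iota:W^{1,p}\to C^0(\wbar{\O},W^{1,p})$, each of norm $1$, since $\sup_{z_0}\norm{f}_B=\norm{f}_B$. Viewing $C^0(\wbar{\O},W^{1,p})\subset C^0(\wbar{\O},L^p)$, these two maps agree on $L^p\cap W^{1,p}=W^{1,p}$, so $\iota$ is a morphism of the Banach couples $(L^p,W^{1,p})\to(C^0(\wbar{\O},L^p),C^0(\wbar{\O},W^{1,p}))$. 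Applying Definition \ref{functorReq} with $m=1$ (i.e. ordinary exact interpolation of exponent $\theta$, available since $F_\theta$ is such a functor by Lemma \ref{functorExist}) gives $\iota:A_\theta\to X_\theta$ with norm at most $1^{1-\theta}\cdot 1^{\theta}=1$; in particular $\tilde f=\iota f\in X_\theta$ and $\norm{\tilde f}_{X_\theta}\le\norm{f}_{A_\theta}$.

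For the reverse inequality, fix $z_0\in\wbar{\O}$ and use the evaluation $\mathrm{ev}_{z_0}:g\mapsto g(z_0)$. Since $\norm{g(z_0)}_B\le\sup_{z_0'}\norm{g(z_0')}_B$, this defines bounded linear maps $C^0(\wbar{\O},L^p)\to L^p$ and $C^0(\wbar{\O},W^{1,p})\to W^{1,p}$ of norm at most $1$, again compatible on the intersection, so $\mathrm{ev}_{z_0}$ is a couple morphism; by Definition \ref{functorReq} once more, $\mathrm{ev}_{z_0}:X_\theta\to A_\theta$ has norm at most $1$. Since $\mathrm{ev}_{z_0}\tilde f=f$, this yields $\norm{f}_{A_\theta}\le\norm{\tilde f}_{X_\theta}$, and combining the two bounds gives the claimed equality.

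The only place needing genuine care — and the closest thing to an obstacle — is the bookkeeping: checking that both couples are compatible (they share a common ambient Hausdorff space, as noted in the remark preceding the lemma) and that $\iota$ and $\mathrm{ev}_{z_0}$ really are the \emph{same} operators on the relevant intersection spaces, so that they qualify as morphisms of couples. The endpoint cases $\theta\in\{0,1\}$ are immediate from the definition of the $C^0$-norm, and the argument above also covers them since the trivial functors are exact of exponent $0$ and $1$.
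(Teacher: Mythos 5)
Your proposal is correct and follows essentially the same route as the paper: the paper interpolates the constant-embedding operator $I$ (your $\iota$) and the evaluation $P f = f(0)$ (your $\mathrm{ev}_{z_0}$ with $z_0=0$), each of norm $1$ at both endpoints, and concludes the equality exactly as you do. The only cosmetic difference is your arbitrary evaluation point and the extra remarks on couple compatibility.
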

\begin{proof}
For any Banach space $B$ consider the operators $I:B \to C^0(\wbar{\O},B)$, $Ig(z_0) = g$ for all $z_0 \in \wbar{\O}$, and $P:C^0(\wbar{\O},B) \to B$, $Pf = f(0)$. Then 
\begin{equation}
\begin{cases}
I: L^p(\O) \to C^0(\wbar{\O}, L^p(\O)), \quad \norm{Ig}_{C^0(L^p)} = \norm{g}_{L^p}\\
I: W^{1,p}(\O) \to C^0(\wbar{\O}, W^{1,p}(\O)), \quad \norm{Ig}_{C^0(W^{1,p})} = \norm{g}_{W^{1,p}}
\end{cases}
\end{equation}
so by interpolating with $F_\theta$ we have $I:A_\theta \to X_\theta$, $\norm{Ig}_{X_\theta} \leq \norm{g}_{A_\theta}$. Similarly we get $P:X_\theta \to A_\theta$, $\norm{Pf}_{A_\theta} \leq \norm{f}_{X_\theta}$. But $\tilde{f} = If$ and $P\tilde{f} = f$, so 
\begin{equation}
\lVert \tilde{f} \rVert_{X_\theta} = \norm{If}_{X_\theta} \leq \norm{f}_{A_\theta} = \lVert P\tilde{f} \rVert_{A_\theta} \leq \lVert \tilde{f} \rVert_{X_\theta}.
\end{equation}
\end{proof}

\begin{remark}
Using this lemma we can make sense of expressions like $q + f$, $qf$, etc\ldots when $q \in A_\theta, f\in X'_\theta \cup X_\theta$. We won't usually explicitly write out the operators $I$ and $P$.
\end{remark}

\bigskip
\begin{corollary}[to thm \ref{BIGTHM}]
\label{corollary1}
Let $n>1$, $2<p<\infty$, $\frac{1}{p*} = \frac{1}{2} + \frac{1}{p}$, $r > 0$, $\theta \in [0,1]$. Then there exists $C_{r,p} < \infty$ such that if $q\in A_\theta(\O)$ we have
\begin{align}
&\norm{\Ca\big(e^{-inR}\Cab(e^{inR}a)\big)}_{X_\theta} \leq C_{r,p} n^{r-\frac{1}{p*}}\norm{a}_{X_\theta},\\
&\norm{\Ca\big(e^{-inR}\Cab(e^{inR}qf)\big)}_{X'_\theta} \leq C_{r,p} n^{(r-\frac{1}{p*})\theta - \frac{1}{5}(1-\theta)}\norm{q}_{A_\theta} \norm{f}_{X'_\theta},
\end{align}
with corresponding mapping properties.
\end{corollary}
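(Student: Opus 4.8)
The plan is to read off both inequalities from endpoint ($\theta\in\{0,1\}$) mapping properties of the composed operator $S_n := \Ca(e^{-inR}\cdot)\circ\Cab(e^{inR}\cdot)$ and then invoke the interpolation functor $F_\theta$ — ordinary (linear) interpolation of definition \ref{functorReq} for the first estimate, multilinear interpolation for the second. The fact that makes this work is that $\Cab(e^{inR}\cdot)$ smooths $L^p(\O)$ into $W^{1,p}(\O)$ at no cost in $n$: indeed $\d\,\Cab g = g$ and $\db\,\Cab g = \wbar S g$, where $\wbar S$ is the conjugated Beurling transform, bounded on $L^p$ for $1<p<\infty$, while $\norm{\Cab g}_p \leq C\norm{g}_{p*} \leq C'\norm{g}_p$ on the bounded domain $\O$ by (the conjugate of) lemma \ref{ONeilLemma1}; hence $\norm{\Cab(e^{inR}g)}_{1,p}\leq C\norm{g}_p$ uniformly in $n$. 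So a function carrying no Sobolev regularity becomes, after one application of $\Cab(e^{inR}\cdot)$, an admissible input for theorem \ref{BIGTHM}.

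For the first estimate it suffices, since $X_0 = C^0(L^p)$ and $X_1 = C^0(W^{1,p})$, to establish $S_n:C^0(L^p)\to C^0(L^p)$ and $S_n:C^0(W^{1,p})\to C^0(W^{1,p})$, each with norm $\leq C_{r,p}n^{r-1/p*}$, and then interpolate with $F_\theta$. The first mapping is immediate: $\Cab(e^{inR}\cdot):C^0(L^p)\to C^0(W^{1,p})$ boundedly by the smoothing fact, followed by $\Ca(e^{-inR}\cdot):C^0(W^{1,p})\to C^0(L^p)$ with norm $\leq C_{r,p}n^{r-1/p*}$ from theorem \ref{BIGTHM}. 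For the second, set $b=\Cab(e^{inR}a)$ with $a\in W^{1,p}$; then $\norm{b}_{1,p}\leq C\norm{a}_p$, so by theorem \ref{BIGTHM} $\norm{S_n a}_p=\norm{\Ca(e^{-inR}b)}_p\leq C_{r,p}n^{r-1/p*}\norm{b}_{1,p}\leq C_{r,p}n^{r-1/p*}\norm{a}_{1,p}$; moreover $\db S_n a=e^{-inR}b$ and $\d S_n a=S(e^{-inR}b)$ (with $S=\d\Ca$ bounded on $L^p$), so both have $L^p$ norm $\leq C\norm{b}_p=C\norm{\Cab(e^{inR}a)}_p\leq C_{r,p}n^{r-1/p*}\norm{a}_{1,p}$ by the conjugate of theorem \ref{BIGTHM}. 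Hence $\norm{S_n a}_{1,p}\leq C_{r,p}n^{r-1/p*}\norm{a}_{1,p}$.

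For the second estimate, view $(q,f)\mapsto S_n(qf)$ as a bilinear map and bound it at the two endpoints. At $\theta=1$, $W^{1,p}(\O)$ is a Banach algebra (as $p>2$), so $\norm{qf}_{C^0(W^{1,p})}\leq C\norm{q}_{1,p}\norm{f}_{C^0(W^{1,p})}$, and composing with $S_n:C^0(W^{1,p})\to C^0(W^{1,p})$ (norm $\leq C_{r,p}n^{r-1/p*}$ from the first estimate) gives $W^{1,p}\times C^0(W^{1,p})\to C^0(W^{1,p})$ with norm $\leq C_{r,p}n^{r-1/p*}$. At $\theta=0$, $\norm{qf}_{C^0(L^p)}\leq\norm{q}_p\norm{f}_{C^0(L^\infty)}$; then $\Cab(e^{inR}\cdot):C^0(L^p)\to C^0(W^{1,p})$ ($n$-independent norm, smoothing fact) followed by $\Ca(e^{-inR}\cdot):C^0(W^{1,p})\to C^0(L^\infty)$ (norm $\leq C_p n^{-1/5}$, second part of theorem \ref{BIGTHM}) gives $L^p\times C^0(L^\infty)\to C^0(L^\infty)$ with norm $\leq C_p n^{-1/5}$. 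Applying the multilinear interpolation of definition \ref{functorReq} to the couples $(L^p,W^{1,p})$, $(C^0(L^\infty),C^0(W^{1,p}))$ in the two slots and $(C^0(L^\infty),C^0(W^{1,p}))$ in the target yields $A_\theta\times X'_\theta\to X'_\theta$ with norm $\leq (C_p n^{-1/5})^{1-\theta}(C_{r,p}n^{r-1/p*})^{\theta}=C_{r,p}\,n^{(r-1/p*)\theta-\frac{1}{5}(1-\theta)}$, which is the stated bound.

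The part needing care — though not a real obstacle — is checking that all these composed operators actually take values in the $C^0(\wbar{\O},\cdot)$ spaces, i.e. depend continuously on $z_0$; this gives the "corresponding mapping properties". It is done exactly as at the start of the proof of theorem \ref{BIGTHM}: one uses $\norm{e^{\pm inR}}_{C^1(\wbar{\O},z_0)}\leq 11n$ together with the boundedness of the $z_0$-independent operators $\Ca$, $\Cab$, $S$, $\wbar S$ and of pointwise multiplication by $q$ (and by $f(z_0)$, which is itself continuous in $z_0$). The one conceptual point is the interplay of smoothness with the gain in $n$: theorem \ref{BIGTHM} only produces decay on $W^{1,p}$-inputs, and it is precisely the $L^p\to W^{1,p}$ smoothing of the Cauchy operator $\Cab(e^{inR}\cdot)$, which costs nothing in $n$, that feeds the outer operator the regularity it needs; once this is in place, the stated exponents are just the output of the endpoint bookkeeping and the estimate $M_0^{1-\theta}M_1^{\theta}$.
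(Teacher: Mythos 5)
Your proposal is correct and follows essentially the same route as the paper: prove the two endpoint bounds ($\theta=0$ via inner smoothing $\Cab(e^{\pm inR}\cdot):L^p\to W^{1,p}$ at no cost in $n$ plus the decay of the outer operator from theorem \ref{BIGTHM}, and $\theta=1$ via the decay of the inner operator on $W^{1,p}$ inputs plus the $n$-free smoothing $\Ca:L^p\to W^{1,p}$, with H\"older respectively the Banach-algebra property handling the product $qf$), then apply the (multilinear) interpolation of definition \ref{functorReq} to get the exponent $(r-\tfrac{1}{p*})\theta-\tfrac{1}{5}(1-\theta)$. The only cosmetic differences are that you justify the smoothing $\norm{\Ca g}_{1,p}\lesssim\norm{g}_p$ explicitly through the Beurling transform, where the paper cites it as a known fact, and that you spell out the $z_0$-continuity, which the paper leaves to the argument in theorem \ref{BIGTHM}.
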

\begin{proof}
It is enough to prove the limiting cases and the rest will follow from the definition of $F_\theta$. We use theorem \ref{BIGTHM} and the facts that $\norm{\Ca f}_{1,p} \leq c_p \norm{f}_p$. We get the mapping properties and the following estimates uniformly in $z_0$:
\begin{equation}
\begin{split}
&\norm{\Ca\big(e^{-inR}\Cab(e^{inR}a)\big)}_p \leq C_{r,p} n^{r-\frac{1}{p*}} \norm{\Cab(e^{inR}a)}_{1,p} \leq c_p C_{r,p} n^{r-\frac{1}{p*}} \norm{a}_p,\\
&\norm{\Ca\big(e^{-inR}\Cab(e^{inR}a)\big)}_{1,p} \leq c_p \norm{\Cab(e^{inR}a)}_p \leq c_p C_{r,p} n^{r-\frac{1}{p*}} \norm{a}_{1,p},
\end{split}
\end{equation}
so the first claim follows.

For the second claim we use the second part of theorem \ref{BIGTHM}:
\begin{equation}
\begin{split}
\big\lVert\Ca\big(&e^{-inR}\Cab(e^{inR}qf)\big)\big\rVert_\infty \leq C_p n^{-\frac{1}{5}} \norm{\Cab(e^{inR}qf)}_{1,p} \leq c_pC_p n^{-\frac{1}{5}} \norm{qf}_{p} \\
&\leq c_p C_p n^{-\frac{1}{5}} \norm{q}_p \norm{f}_\infty, \\
\big\lVert\Ca\big(&e^{-inR}\Cab(e^{inR}qf)\big)\big\rVert_{1,p} \leq c_p \norm{\Cab(e^{inR}qf)}_p \leq c_p C_{r,p} n^{r-\frac{1}{p*}} \norm{qf}_{1,p} \\
&\leq c_pC_{r,p}A_p n^{r-\frac{1}{p*}} \norm{q}_{1,p} \norm{f}_{1,p},
\end{split}
\end{equation}
so the second claim follows, because the coefficients do not depend on $z_0$.
\end{proof}

The idea to continue is to take solutions $f_{z_0,n}$ from $X'_\theta$ by using the second estimate in corollary \ref{corollary1}. This allows us to multiply by $f$ because $X'_\theta$ is a multiplier space for $A_\theta$ (more exactly, for $I A_\theta \subset X_\theta$, see lemma \ref{noZ0} and the remark after it). After that the first estimate gives $\norm{f-1} \leq n^{r-\frac{1}{p*}} \norm{q}_{A_\theta}$. Basically this is a sort of boot-strapping argument. The following lemma is needed for the boot-strapping.

\smallskip
In particular here we should use $I A_\theta$, but we identify it with $A_\theta$.
\begin{lemma}
\label{multiplierLemma}
Let $2<p<\infty$. Then there is $C_p < \infty$ such that for all $\theta \in [0,1]$, $f\in A_\theta$, $g\in X'_\theta$ we have $fg \in X_\theta$ with $\norm{fg}_{X_\theta} \leq C_p \norm{f}_{A_\theta} \norm{g}_{X'_\theta}$.
\end{lemma}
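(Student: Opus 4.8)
The plan is to obtain the estimate by \emph{bilinear} interpolation applied to the pointwise product map $T(f,g) = \tilde{f}g$, where $\tilde{f}(z_0) = f$ for every $z_0 \in \wbar{\O}$ (the operator $I$ of lemma \ref{noZ0}). Since by lemma \ref{functorExist} the functor $F_\theta$ enjoys the multilinear interpolation property of definition \ref{functorReq}, it suffices to verify that $T$ is bounded at the two endpoints of the couples $(L^p, W^{1,p})$, $(C^0(\wbar{\O},L^\infty), C^0(\wbar{\O},W^{1,p}))$ and $(C^0(\wbar{\O},L^p), C^0(\wbar{\O},W^{1,p}))$. All three are compatible Banach couples: $L^p(\O), W^{1,p}(\O), L^\infty(\O) \hookrightarrow L^1(\O)$ for the bounded domain $\O$ (as already noted in the remark before lemma \ref{noZ0}), hence the corresponding spaces of continuous vector-valued functions all embed continuously into $C^0(\wbar{\O}, L^1(\O))$, which is a Banach space by lemma \ref{wellDf} and so Hausdorff.

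For the $\theta = 0$ endpoint I would show $T: L^p \oplus C^0(\wbar{\O}, L^\infty) \to C^0(\wbar{\O}, L^p)$ with norm $1$. Given $f \in L^p(\O)$ and $g \in C^0(\wbar{\O}, L^\infty)$, Hölder's inequality gives $\norm{f g(\cdot,z_0)}_p \le \norm{f}_p \norm{g(\cdot,z_0)}_\infty \le \norm{f}_p \norm{g}_{C^0(L^\infty)}$ for each $z_0$; moreover $z_0 \mapsto f g(\cdot, z_0)$ is continuous $\wbar{\O} \to L^p(\O)$, being the composition of the continuous map $z_0 \mapsto g(\cdot, z_0) \in L^\infty$ with the bounded linear multiplication $u \mapsto fu$, $L^\infty(\O) \to L^p(\O)$. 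So $\tilde{f}g \in C^0(\wbar{\O}, L^p)$ with the stated bound.

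For the $\theta = 1$ endpoint I would use that $W^{1,p}(\O)$ is, up to a constant $c_p$, a Banach algebra when $p > 2$ in dimension two: by the Sobolev embedding $W^{1,p}(\O) \hookrightarrow L^\infty(\O)$ and the product rule, $\norm{uv}_{1,p} \le c_p \norm{u}_{1,p}\norm{v}_{1,p}$ — this is the fact $\norm{AB}_{1,t} \le c_t \norm{A}_{1,t}\norm{B}_{1,p}$ used in the proof of theorem \ref{BIGTHM}, specialized to $t = p$. Hence for $f \in W^{1,p}$ and $g \in C^0(\wbar{\O}, W^{1,p})$ we get $\norm{f g(\cdot, z_0)}_{1,p} \le c_p \norm{f}_{1,p}\norm{g(\cdot, z_0)}_{1,p} \le c_p\norm{f}_{1,p}\norm{g}_{C^0(W^{1,p})}$, and continuity in $z_0$ follows as before by composing $z_0 \mapsto g(\cdot, z_0)$ with the bounded linear map $v \mapsto fv$ on $W^{1,p}$. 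Thus $T: W^{1,p} \oplus C^0(\wbar{\O}, W^{1,p}) \to C^0(\wbar{\O}, W^{1,p})$ with norm $\le c_p$.

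Applying the multilinear interpolation property with exponent $\theta$ then yields $T: A_\theta \oplus X'_\theta \to X_\theta$ with norm at most $1^{1-\theta} c_p^\theta \le C_p := \max(1, c_p)$, which is exactly the claim, with $C_p$ independent of $\theta$. The only genuinely delicate point is the bookkeeping: one must stay consistent that in every vector-valued space $z_0$ is the ``continuous'' variable and $z$ the one over which the Lebesgue or Sobolev norm is taken, and that the product of $f = f(z)$ with $g = g(z,z_0)$ is interpreted as $\tilde{f}g$ in the sense of lemma \ref{noZ0}; the analytic content — Hölder's inequality and the algebra property of $W^{1,p}$ for $p>2$ — is entirely routine.
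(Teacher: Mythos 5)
Your proof is correct and follows the same route as the paper: bilinear interpolation of the product map between the same two endpoint estimates, namely H\"older's inequality giving $L^p \oplus C^0(\wbar{\O},L^\infty) \to C^0(\wbar{\O},L^p)$ and the Banach algebra property of $W^{1,p}(\O)$ for $p>2$ giving $W^{1,p} \oplus C^0(\wbar{\O},W^{1,p}) \to C^0(\wbar{\O},W^{1,p})$. The only difference is that you spell out the continuity in $z_0$ and the identification $\tilde{f}=If$ from lemma \ref{noZ0}, which the paper leaves implicit.
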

\begin{proof}
This follow by multilinear interpolation and the fact that $W^{1,p}$ is a Banach algebra:
\begin{equation}
\begin{split}
\sup_{z_0} \norm{fg}_p &\leq \norm{f}_p \sup_{z_0}\norm{g}_\infty \\
\sup_{z_0} \norm{fg}_{1,p} &\leq C_p \norm{f}_{1,p} \sup_{z_0} \norm{g}_{1,p}.
\end{split}
\end{equation}
\end{proof}

Next is the big theorem, which shows the existence of suitable solutions and gives the behaviour of the remainder terms.
\begin{definition}
\label{n0def}
By $n_0(r,p,\theta,M)$ we denote the number
\begin{equation}
\max\left(1, (C_{r,p}M)^{-1\left/\left((r-\frac{1}{p*})\theta - \frac{1}{5}(1-\theta)\right)\right.} \right),
\end{equation}
which grows with $M$ if $r < \frac{1}{p*}$.
\end{definition}

\begin{theorem}
\label{solEx}
Let $2<p<\infty$, $\frac{1}{p*} = \frac{1}{2} + \frac{1}{p}$, $0<r < \frac{1}{p*}$, $\theta \in [0,1]$, $q\in A_\theta$, $n \geq n_0(r,p,\theta,\norm{q}_{A_\theta})$. Then there is a unique $f_n \in X'_\theta$ such that for all $z_0$
\begin{equation}
f_n = 1 - \tfrac{1}{4} \Ca\big(e^{-inR}\Cab(e^{inR}qf_n)\big).
\end{equation}
Moreover we have $f_n \in X_\theta$ and
\begin{equation}
\norm{f_n-1}_{X_\theta} \leq C_{r,p} n^{r-\frac{1}{p*}} \norm{q}_{A_\theta} \text{ and } \sup_{z_0} \norm{f_n}_{1,p} \leq 2 + C'_p \norm{q}_{A_\theta}.
\end{equation}
\end{theorem}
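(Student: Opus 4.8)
The plan is to set this up as a contraction mapping (Banach fixed point) argument in the space $X'_\theta$, which is complete by Lemma \ref{wellDf} combined with the interpolation construction. Define the map $T : X'_\theta \to X'_\theta$ by $T g = 1 - \tfrac14 \Ca\big(e^{-inR}\Cab(e^{inR}qg)\big)$. First I would check that $T$ indeed maps $X'_\theta$ into itself: the constant function $1$ lies in every $C^0(\wbar\O, B)$ hence in $X'_\theta$, and the second estimate of Corollary \ref{corollary1} together with Lemma \ref{multiplierLemma} (so that $qg$ makes sense as an element of $X_\theta$, with $q$ identified with $\tilde q \in IA_\theta$) shows that $\Ca\big(e^{-inR}\Cab(e^{inR}qg)\big) \in X'_\theta$ with norm at most $C_{r,p} n^{(r-\frac1{p*})\theta - \frac15(1-\theta)}\norm{q}_{A_\theta}\norm{g}_{X'_\theta}$.

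Since $T$ is affine, $\norm{Tg_1 - Tg_2}_{X'_\theta} = \tfrac14\norm{\Ca\big(e^{-inR}\Cab(e^{inR}q(g_1-g_2))\big)}_{X'_\theta} \le \tfrac14 C_{r,p} n^{(r-\frac1{p*})\theta - \frac15(1-\theta)}\norm{q}_{A_\theta}\norm{g_1-g_2}_{X'_\theta}$. Because $0 < r < \frac1{p*}$, the exponent $(r-\frac1{p*})\theta - \frac15(1-\theta)$ is strictly negative for every $\theta \in [0,1]$, so for $n \ge n_0(r,p,\theta,\norm{q}_{A_\theta})$ — which by Definition \ref{n0def} is exactly chosen so that $C_{r,p}\norm{q}_{A_\theta} n^{(r-\frac1{p*})\theta - \frac15(1-\theta)} \le 1$ — the Lipschitz constant is at most $\tfrac14 < 1$. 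Hence $T$ is a contraction and has a unique fixed point $f_n \in X'_\theta$; this gives existence and uniqueness of the solution to the integral equation.

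For the quantitative bounds, I would first upgrade regularity: $f_n = 1 + (f_n - 1)$ and $f_n - 1 = -\tfrac14\Ca\big(e^{-inR}\Cab(e^{inR}qf_n)\big)$. Write $qf_n = q + q(f_n-1)$. The term $-\tfrac14\Ca\big(e^{-inR}\Cab(e^{inR}q)\big)$ is in $X_\theta$ by the first estimate of Corollary \ref{corollary1} applied to $a = q$ (using $q \in A_\theta \hookrightarrow X_\theta$ via Lemma \ref{noZ0}), with norm $\le \tfrac14 C_{r,p} n^{r-\frac1{p*}}\norm{q}_{A_\theta}$; the term $-\tfrac14\Ca\big(e^{-inR}\Cab(e^{inR}q(f_n-1))\big)$ is in $X_\theta$ by the first estimate with $a = q(f_n-1) \in X_\theta$ (Lemma \ref{multiplierLemma}, since $f_n - 1 \in X'_\theta$), with norm $\le \tfrac14 C_{r,p}^2 n^{r-\frac1{p*}}\norm{q}_{A_\theta}\norm{f_n-1}_{X'_\theta}$. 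Absorbing the factor $\norm{f_n-1}_{X'_\theta} \le \tfrac14 C_{r,p} n^{(\cdots)}\norm{q}_{A_\theta} \cdot (\text{geometric series}) \le \tfrac13$ (bounded once $n \ge n_0$) into the constant gives $\norm{f_n-1}_{X_\theta} \le C_{r,p}' n^{r-\frac1{p*}}\norm{q}_{A_\theta}$, as claimed. For the last estimate, apply $P$ (evaluation at a point $z_0$) and the continuous embedding $X_\theta \hookrightarrow C^0(\wbar\O, W^{1,p})$ coming from $A_\theta \hookrightarrow W^{1,p}$: $\sup_{z_0}\norm{f_n}_{1,p} \le 1 + \sup_{z_0}\norm{f_n - 1}_{1,p} \le 1 + C\norm{f_n - 1}_{X_\theta}$, and then bounding $n^{r - \frac1{p*}} \le 1$ when $n \ge 1$ yields $\sup_{z_0}\norm{f_n}_{1,p} \le 2 + C'_p\norm{q}_{A_\theta}$.

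The main obstacle is bookkeeping rather than a deep difficulty: one must be careful that all the products $qf_n$, $q(f_n-1)$ are interpreted via the embedding $I : A_\theta \to X_\theta$ of Lemma \ref{noZ0} and that Lemma \ref{multiplierLemma} applies to land them in the right space ($X_\theta$ versus $X'_\theta$), and that the exponent in $n_0$ is negative uniformly in $\theta$ — which is exactly where the hypothesis $r < \frac1{p*}$ is used, guaranteeing $(r - \frac1{p*})\theta - \frac15(1-\theta) < 0$ even at $\theta = 0$ and $\theta = 1$. A secondary point worth stating explicitly is that the fixed point obtained in $X'_\theta$ is then shown a posteriori to lie in the smaller space $X_\theta$; this is legitimate because the fixed-point equation itself, via the first estimate of Corollary \ref{corollary1}, produces an $X_\theta$ element on the right-hand side once we know $f_n \in X'_\theta$.
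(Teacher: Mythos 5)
Your contraction argument, the a priori bound on $\norm{f_n}_{X'_\theta}$, and the derivation of $\norm{f_n-1}_{X_\theta}\leq C_{r,p}n^{r-\frac{1}{p*}}\norm{q}_{A_\theta}$ follow essentially the paper's route (the paper applies Lemma \ref{multiplierLemma} directly to $qf_n$ after bounding $\norm{f_n}_{X'_\theta}\leq 2c'_p$, rather than splitting $qf_n=q+q(f_n-1)$, but that is cosmetic). The genuine gap is in your last step. You invoke a ``continuous embedding $X_\theta\hookrightarrow C^0(\wbar{\O},W^{1,p})$ coming from $A_\theta\hookrightarrow W^{1,p}$'', and both inclusions point the wrong way: since $W^{1,p}\subset L^p$, interpolation gives $W^{1,p}=A_1\subset A_\theta\subset A_0=L^p$ (morally $A_\theta\approx W^{\theta,p}$), and correspondingly $C^0(\wbar{\O},W^{1,p})\subset X_\theta\subset C^0(\wbar{\O},L^p)$. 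Hence membership of $f_n-1$ in $X_\theta$ gives no control on $\sup_{z_0}\norm{f_n-1}_{1,p}$ when $\theta<1$, and the inequality $\sup_{z_0}\norm{f_n-1}_{1,p}\leq C\norm{f_n-1}_{X_\theta}$ is simply false in general.

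The $W^{1,p}$ bound must be gained from smoothing, not from an embedding: return once more to the fixed-point equation and use $\Ca:L^p\to W^{1,p}$ (and $\Cab:L^p\to W^{1,p}\subset L^p$), so that $\sup_{z_0}\norm{f_n}_{1,p}\leq \norm{1}_{1,p}+\tfrac14\sup_{z_0}\norm{\Ca\big(e^{-inR}\Cab(e^{inR}qf_n)\big)}_{1,p}\leq 2+\tfrac14 c_p^2\sup_{z_0}\norm{qf_n}_{p}$; then use the embedding that does hold, namely $X_\theta\subset C^0(\wbar{\O},L^p)$, together with Lemma \ref{multiplierLemma} and your bound $\norm{f_n}_{X'_\theta}\leq c'_p$, to get $\sup_{z_0}\norm{qf_n}_{p}\leq\norm{qf_n}_{X_\theta}\leq C_p\norm{q}_{A_\theta}\norm{f_n}_{X'_\theta}\leq C'_p\norm{q}_{A_\theta}$. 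This is exactly how the paper closes the argument; with that repair your proposal matches the paper's proof (your Lipschitz constant $\tfrac14$ versus the paper's $\tfrac12$, and your $\norm{f_n-1}_{X'_\theta}\lesssim 1$ bookkeeping, are harmless variations).
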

\begin{proof}
Define $T_n$ by $f \mapsto 1 - \tfrac{1}{4} \Ca\big(e^{-inR}\Cab(e^{inR}qf)\big)$. By corollary \ref{corollary1} we have $T_n : X'_\theta \to X'_\theta$ and get the norm estimate
\begin{multline}
\norm{T_n f - T_n f'}_{X'_\theta} = \tfrac{1}{4}\norm{\Ca\big(e^{-inR}\Cab(e^{inR}q(f-f'))\big)}_{X'_\theta}\\
\leq \tfrac{1}{4}c_{r,p} n^{(r-\frac{1}{p*})\theta - \frac{1}{5}(1-\theta)} \norm{q}_{A_\theta} \norm{f-f'}_{X'_\theta} \leq \tfrac{1}{2} \norm{f-f'}_{X'_\theta},
\end{multline}
because $n \geq n_0(r,p,\theta,\norm{q}_{A_\theta}) < \infty$. Thus $T_n$ is a contraction in the Banach space $X'_\theta$ and so has a unique fixed point $f_n$. there.

To prove the second claim do the same reasoning as in the previous formula and so
\begin{equation}
\norm{f_n}_{X'_\theta} \leq \norm{1}_{X'_\theta} + \tfrac{1}{4} \norm{\Ca\big(e^{-inR}\Cab(e^{inR}qf_n)\big)}_{X'_\theta} \leq c'_p + \tfrac{1}{2}\norm{f_n}_{X'_\theta},
\end{equation}
because $n \geq n_0(r,p,\theta,\norm{q}_{A_\theta})$. Thus $\norm{f_n}_{X'_\theta} \leq 2 c'_p$. Now by the first norm estimate of corollary \ref{corollary1} and the multiplier lemma \ref{multiplierLemma} we get
\begin{multline}
\norm{f_n-1}_{X_\theta} = \tfrac{1}{4}\norm{\Ca\big(e^{-inR}\Cab(e^{inR}qf_n)\big)}_{X_\theta} \\
\leq \tfrac{1}{4} c_{r,p} n^{r-\frac{1}{p*}} \norm{qf_n}_{X_\theta} \leq C_{r,p} n^{r-\frac{1}{p*}} \norm{q}_{A_\theta}.
\end{multline}

The last claim follows from the well-known fact that $\Ca: L^p \to W^{1,p}$ and the embedding $X_\theta \subset C^0(L^p)$:
\begin{multline}
\sup_{z_0} \norm{f_n}_{1,p} \leq \pi^{1/p} + \sup_{z_0} \tfrac{1}{4}\norm{\Ca\big(e^{-inR}\Cab(e^{inR}qf_n)\big)}_{1,p} \\
\leq 2 + \tfrac{1}{4}c_p^2 \sup_{z_0} \norm{qf_n}_p \leq 2+\tfrac{1}{4} c_p^2 \norm{qf_n}_{X_\theta} \leq 2 + C'_p \norm{q}_{A_\theta}.
\end{multline}
\end{proof}

Next we handle the error term integral.

\begin{theorem}
\label{errorTermIntegral}
Let $2<p<\infty$. Then there exists $C_p < \infty$ such that if $n>0$, $\theta \in [0,1]$, $Q\in A_\theta$ and $r_{z_0,n} \in X_\theta$ we have
\begin{equation}
\norm{\int_\O \frac{2n}{\pi}e^{inR}Q(z)r_{z_0,n}(z)\,dm(z) }_{L^2(\O,z_0)} \leq C_p n^{1-\theta} \norm{Q}_{A_\theta} \norm{r_{z_0,n}}_{X_\theta}.
\end{equation}
\end{theorem}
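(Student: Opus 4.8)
The plan is to prove the two limiting cases $\theta = 0$ and $\theta = 1$ and then interpolate using the bilinear interpolation property of $F_\theta$ (definition \ref{functorReq}), viewing the map $(Q, r_{z_0,n}) \mapsto \frac{2n}{\pi}\int_\O e^{inR} Q r_{z_0,n}\,dm$ as a bounded bilinear map and noting that the target $L^2(\O,z_0)$ does not depend on $\theta$, so $F_\theta(L^2,L^2) = L^2$. For $\theta = 1$ we must bound the operator $L^2 \times C^0(\wbar\O, W^{1,p}) \to L^2$ with norm $O(1)$; for $\theta = 0$ we must bound $L^2 \times C^0(\wbar\O, L^p) \to L^2$ with norm $O(n)$. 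Interpolating these gives the norm $O(n^{1-\theta})$, with $\norm{Q}_{A_\theta}$ and $\norm{r_{z_0,n}}_{X_\theta}$ appearing on the right.

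For the $\theta = 0$ case the estimate is crude: bound $\abs{\int_\O e^{inR} Q r\,dm} \leq \norm{Q}_{L^2} \norm{r}_{L^2}$ pointwise in $z_0$ by Cauchy--Schwarz (using $\abs{e^{inR}} = 1$), then take the $L^2(\O,z_0)$ norm. Since $\norm{r}_{L^2(\O,z)}$ is bounded uniformly in $z_0$ by $\norm{r}_{C^0(L^p)}$ (as $\O$ has finite measure and $p > 2$, so $L^p(\O) \subset L^2(\O)$), we get $\norm{\cdot}_{L^2(\O,z_0)} \leq (\text{vol }\O)^{1/2} \cdot \frac{2n}{\pi}\norm{Q}_{L^2}\norm{r}_{C^0(L^p)}$, which is the required $O(n)$ bound with the right norms; here I would use $\norm{Q}_{L^2} \leq c_p \norm{Q}_{L^p}$ again by finite measure, so that $\norm{Q}_{L^p} = \norm{Q}_{A_0}$ appears.

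For the $\theta = 1$ case we need to exploit the $W^{1,p}$ regularity of $r_{z_0,n}$ to gain a factor $n^{-1}$ back, which exactly compensates the $2n/\pi$. The natural device is integration by parts against the stationary phase: since $\partial_{\wbar z} R = 2(\wbar z - \wbar{z_0})$, we can write $e^{inR} = \frac{1}{2in(\wbar z - \wbar{z_0})}\,\partial_{\wbar z} e^{inR}$ away from $z = z_0$, and integrate by parts in $\wbar z$ to move $\partial_{\wbar z}$ onto $Q r /(\wbar z - \wbar{z_0})$. This produces three terms: a boundary term over $\partial\O$, a term with $\partial_{\wbar z}(Q r)$, and a term with $(\wbar z - \wbar{z_0})^{-2}$. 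The factor $\frac{2n}{\pi}\cdot\frac{1}{2in}$ kills the $n$. To handle the non-integrable singularity of $(\wbar z - \wbar{z_0})^{-1}$ and $(\wbar z - \wbar{z_0})^{-2}$ at $z_0$, I would split off a small disc of fixed radius around $z_0$ (or cut off as in lemma \ref{testF}) and estimate the near-$z_0$ contribution directly by Cauchy--Schwarz without integrating by parts — this piece is already $O(1)$ since its measure is small and $Q r$ is bounded in $L^2$ — while integrating by parts only on the region where the kernel is smooth.

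The main obstacle is the $\theta = 1$ estimate, specifically controlling the boundary term $\frac{1}{n}\int_{\partial\O} e^{inR}\,\frac{Q r}{\wbar z - \wbar{z_0}}\,z'\,d\sigma$ and the $(\wbar z - \wbar{z_0})^{-2}$ term uniformly in $z_0 \in \O$ when $z_0$ approaches $\partial\O$, where the cutoff radius cannot shrink but the singularity and the boundary collide. Lemma \ref{boundaryInt} is designed precisely for boundary integrals of this type and should supply the needed $n$-uniform bound (for $z_0$ in any compact subset of $\O$; the statement as used in theorem \ref{BIGTHM} is for $z_0 \in \O$), and the near-boundary case can be absorbed into a version of the $L^2$ Cauchy--Schwarz estimate since the $O(n^{1-\theta})$ target for $0 < \theta < 1$ leaves room. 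One should be careful that $Q r \in W^{1,p*}$ only (product of $W^{1,p}$ functions lands in $W^{1,p*}$ by Hölder, using $1/p* = 1/p + 1/2 \geq 1/p + 1/p$ since $p > 2$), which is exactly the Sobolev regularity that lemma \ref{boundaryInt} and the Cauchy operator estimates of lemma \ref{ONeilLemma1} are set up to accept, so the bookkeeping should close.
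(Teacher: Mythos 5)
Your overall architecture coincides with the paper's: the paper also treats the statement as a bilinear estimate, proves the two endpoint bounds (an $O(n)$ bound on $C^0(L^p)\times L^p$ via the crude H\"older/Cauchy--Schwarz argument you give for $\theta=0$, and an $n$-uniform bound on $C^0(W^{1,p})\times W^{1,p}$), and concludes by the multilinear interpolation property of $F_\theta$, the target $L^2(\O,z_0)$ being fixed. The difference is that the paper does not prove the $\theta=1$ endpoint at all: it quotes it as Theorem 5.2.6 of \cite{lis}. That endpoint is the real content of the theorem, and the one-integration-by-parts argument you sketch for it does not close as written.

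Concretely: (i) the near-$z_0$ piece. With a disc of \emph{fixed} radius the contribution is $\frac{2n}{\pi}\big|\int_{\abs{z-z_0}<\rho}e^{inR}Qr\,dm\big|\leq \frac{2n}{\pi}\norm{Qr}_2(\pi\rho^2)^{1/2}$, which is $O(n)$, not $O(1)$: the prefactor $\frac{2n}{\pi}$ does not go away because the measure is small. You need a radius $\delta\sim n^{-1/2}$ (as in theorem \ref{BIGTHM} and lemma \ref{testF}) together with the $L^\infty$ bound $\norm{Qr}_\infty\lesssim\norm{Q}_{1,p}\norm{r}_{1,p}$ (Sobolev embedding, $p>2$); an $L^2$ bound alone still leaves $O(n^{1/2})$. (ii) The term in which $\db$ falls on $(\wbar z-\wbar z_0)^{-1}$ has the kernel $(\wbar z-\wbar z_0)^{-2}$, which is not locally integrable: with the cut-off at radius $\sim n^{-1/2}$, an absolute-value estimate costs a factor $\ln n$, so the $n$-uniform $\theta=1$ bound does not follow from the estimates you list. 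One must use the oscillation again or the singular-integral (Beurling transform, $L^2$ in $z_0$) structure of that kernel; this is exactly the nontrivial work packed into the cited Theorem 5.2.6. (iii) Lemma \ref{boundaryInt} cannot ``supply the needed $n$-uniform bound'': its right-hand side grows like a positive power of $n$, which in theorem \ref{BIGTHM} is absorbed by a spare factor $n^{-1}$ that you do not have here (after $\frac{2n}{\pi}\cdot\frac{1}{2in}$ your prefactor is already $O(1)$); moreover it estimates the $L^p$ norm in $z$ of a Cauchy-type boundary potential $\int_{\d\O}\frac{e^{-inR}\Tr g(z')}{z-z'}z'\,d\sigma$, whereas your boundary term is a function of $z_0$ with kernel $(\wbar z'-\wbar z_0)^{-1}$ whose $L^2(\O,z_0)$ norm is needed, a different object. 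Finally, the remark that the $O(n^{1-\theta})$ target for $0<\theta<1$ ``leaves room'' cannot repair any of this: the interpolation scheme needs the genuine endpoint bounds, and the theorem is asserted at $\theta=1$ as well. Either cite \cite[Thm 5.2.6]{lis} as the paper does, or supply a proof of the $\theta=1$ endpoint that handles (i)--(iii).
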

\begin{proof}
By \cite[Thm 5.2.6]{lis} we have a $C'_p<\infty$ such that
\begin{equation}
\norm{\int_\O \frac{2n}{\pi}e^{inR}Qr_{z_0,n}\,dm(z) }_{L^2(\O,z_0)} \leq C'_p \norm{Q}_{1,p} \sup_{z_0}\norm{r_{z_0,n}}_{1,p}.
\end{equation}
Because $m(\O) = \pi < \infty$, $p>2$ and H\"older's inequality we get
\begin{multline}
\norm{\int_\O \frac{2n}{\pi}e^{inR}Qr_{z_0,n}\,dm(z) }_{L^2(\O,z_0)} \leq \pi^{1/2} \norm{\int_\O \frac{2n}{\pi}e^{inR}Qr_{z_0,n}\,dm(z) }_{L^\infty(\O,z_0)} \\
\leq \tfrac{2}{\pi^{1/2}} n \norm{Q}_2 \sup_{z_0} \norm{r_{z_0,n}}_2 \leq \tfrac{2}{\pi^{1/2}}(\pi^{1/2-1/p})^2 n \norm{Q}_p \sup_{z_0} \norm{r_{z_0,n}}_p.
\end{multline}
Because $F_\theta$ satisfies multilinear interpolation we get the result.
\end{proof}

\section{Well-posedness and the inverse problem}
Here we define the Dirichlet-Neumann operator, prove an orthogonality formula and define what does it mean that the direct problem is well-posed. In this section we denote $H^s = W^{s,2}$.
\begin{definition}
Let $q\in \mathscr{D}'(\O)$. Then \emph{the direct problem is well-posed} if there is $C<\infty$ such that for any $f \in H^{1/2}(\d \O)$ we have
\begin{enumerate}
\item there is $u\in H^1(\O)$ such that $\Delta u + q u = 0$, $\Tr u = f$,
\item this $u$ is unique
\item $u$ depends continuously on $f$: $\norm{u}_{H^1(\O)} \leq C \norm{f}_{H^{1/2}(\d \O)}$.
\end{enumerate}
\end{definition}

\begin{definition}
Let $q\in L^a(\O)$, $a > 1$, be such that the direct problem is well-posed. Then we define the \emph{Dirichlet-Neumann operator $\Lambda_q$} as follows. For $f \in H^{1/2}(\d\O)$ we define $\Lambda_q f \in H^{-1/2}(\d\O)$ by
\begin{equation}
\label{DNmap}
(\Lambda_q f , g) = \int_\O (-\nabla u \cdot \nabla v + q u v )dm, \quad g \in H^{1/2}(\d\O),
\end{equation}
for any $u,v \in H^1(\O)$ such that $\Tr u = f$, $\Tr v = g$ and $\Delta u + q u = 0$.
\end{definition}

\begin{lemma}
\label{DNok}
The Dirichlet-Neumann operator is well defined and $\Lambda_q : f \mapsto \Lambda_q f $ is a continuous linear operator mapping $H^{1/2}(\d\O) \to H^{-1/2}(\d\O)$ which satisfies
\begin{equation}
(\Lambda_q f, g) = (\Lambda_q g, f), \quad f,g \in H^{1/2}(\d\O).
\end{equation}
\end{lemma}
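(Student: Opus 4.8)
The plan is to establish the three asserted properties — well-definedness, continuity, and symmetry — in that order, all by exploiting the variational identity \eqref{DNmap} together with the well-posedness hypothesis.

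First I would check well-definedness: the value $(\Lambda_q f, g)$ must not depend on the choice of representatives $u,v$. For the dependence on $v$, note that if $\Tr v = \Tr v' = g$ then $v - v' \in H^1_0(\O)$, and since $u$ solves $\Delta u + qu = 0$ weakly, testing against $v-v'$ gives $\int_\O(-\nabla u\cdot\nabla(v-v') + qu(v-v'))\,dm = 0$; here one needs $qu(v-v') \in L^1$, which follows from $q\in L^a$, $a>1$, the Sobolev embedding $H^1(\O)\subset L^s$ for all $s<\infty$ in 2D, and Hölder. For the independence of $u$, uniqueness in the well-posedness definition forces $u$ to be the unique $H^1$-solution with trace $f$, so there is no choice to be made. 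Linearity in $f$ (and in $g$) is then immediate from linearity of the equation and of the trace map, plus uniqueness.

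Next, continuity $\Lambda_q : H^{1/2}(\d\O)\to H^{-1/2}(\d\O)$: for $f\in H^{1/2}(\d\O)$ pick the solution $u$ with $\norm{u}_{H^1}\leq C\norm{f}_{H^{1/2}}$ from well-posedness, and for $g$ pick the harmonic (or any bounded) extension $v$ with $\norm{v}_{H^1}\leq C'\norm{g}_{H^{1/2}}$. Then
\begin{equation}
\abs{(\Lambda_q f, g)} \leq \norm{\nabla u}_2\norm{\nabla v}_2 + \norm{q}_a\norm{u}_s\norm{v}_{s'} \leq C''\norm{u}_{H^1}\norm{v}_{H^1} \leq C'''\norm{f}_{H^{1/2}}\norm{g}_{H^{1/2}},
\end{equation}
where $s,s'$ are finite exponents chosen so that $\tfrac1a+\tfrac1s+\tfrac1{s'}=1$, legitimate in 2D since $H^1\hookrightarrow L^t$ for every $t<\infty$. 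Taking the supremum over $\norm{g}_{H^{1/2}}\leq 1$ gives $\norm{\Lambda_q f}_{H^{-1/2}}\leq C'''\norm{f}_{H^{1/2}}$, and linearity was already shown.

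Finally, symmetry: given $f,g\in H^{1/2}(\d\O)$, let $u$ solve $\Delta u + qu=0$ with $\Tr u = f$ and let $w$ solve $\Delta w + qw=0$ with $\Tr w = g$. Using $v=w$ as the (admissible) extension of $g$ in the definition of $(\Lambda_q f,g)$ and $v=u$ as the extension of $f$ in the definition of $(\Lambda_q g,f)$, both pairings equal the manifestly symmetric quantity $\int_\O(-\nabla u\cdot\nabla w + quw)\,dm$, so $(\Lambda_q f,g)=(\Lambda_q g,f)$. The only subtlety, and the place requiring a little care rather than being automatic, is the integrability bookkeeping with the $L^a$ potential: one must consistently use the 2D Sobolev embedding to make every product $q\cdot(\text{two }H^1\text{ factors})$ lie in $L^1$; once $a>1$ is fixed this is a fixed choice of Hölder exponents and presents no real obstacle.
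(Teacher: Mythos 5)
Your proposal is correct and follows essentially the same route as the paper: well-definedness via $v-v'\in H^1_0(\O)$ and the weak equation for $u$ (with uniqueness of $u$ from well-posedness), continuity via a bounded extension of $g$, H\"older with the $L^a$ potential and the 2D Sobolev embedding, and the stability estimate $\norm{u}_{H^1}\leq C\norm{f}_{H^{1/2}(\d\O)}$, and symmetry by taking both factors to be solutions so that the pairing becomes the manifestly symmetric integral $\int_\O(-\nabla u\cdot\nabla w + quw)\,dm$. The only cosmetic difference is that the paper fixes the H\"older exponents explicitly as $a'$ and $\tfrac{2a}{a-1}$, where you leave them generic; nothing of substance is missing.
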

\begin{proof}
By the well-posedness of the direct problem $u$ is unique on the right-hand side of \eqref{DNmap}. Assume that $v,v' \in H^1(\O)$ satisfy $\Tr v = g = \Tr v'$. Now $v-v' \in H^1_0(\O)$ and because $u$ is a solution to the Schr\"odinger equation we have
\begin{equation}
\int_\O (-\nabla u \cdot \nabla (v-v') + q u (v-v')) dm = 0,
\end{equation}
which implies that all choices of $v$ give the same value for the right-hand side of \eqref{DNmap}.

Note that $H^{-1/2}(\d\O) = \big( H^{1/2}(\d\O) \big)^\ast$. Thus to prove the mapping properties of $\Lambda_q$ it is enough to prove that for a fixed $f\in H^{1/2}(\d\O)$ we have
\begin{equation}
\abs{\int_\O (-\nabla u \cdot \nabla v + quv) dm} \leq C_{\O,a,q} \norm{f}_{H^{1/2}(\d\O)} \norm{\Tr v}_{H^{1/2}(\d\O)}.
\end{equation}
Let $R:H^{1/2}(\d\O) \to H^1(\O)$ be a bounded right inverse to $\Tr$. Note the Sobolev embedding $H^1(\O) \subset L^{\frac{2a}{a-1}}(\O)$ because $\frac{2a}{a-1} < \infty$. Denote $\frac{1}{a}+\frac{1}{a'} = 1$, so by H\"older's inequality, Sobolev embedding and the third condition of the well-posedness of $q$ we get
\begin{equation}
\begin{split}
\Big|&\int_\O (-\nabla u \cdot \nabla v + quv)dm\Big| = \abs{\int_\O (-\nabla u \cdot \nabla R(g) + quR(g)) dm} \\
&\leq \norm{\nabla u}_2 \norm{\nabla R(g)}_2 + \norm{quR(g)}_1 \leq \norm{u}_{H^1} \norm{R(g)}_{H^1} + \norm{q}_a \norm{uR(g)}_{a'} \\
&\leq \norm{u}_{H^1} \norm{R(g)}_{H^1} + \norm{q}_a \norm{u}_{\frac{2a}{a-1}} \norm{R(g)}_{\frac{2a}{a-1}} \\
&\leq C_{\O,a}(1+\norm{q}_a) \norm{u}_{H^1} \norm{R(g)}_{H^1} \\
&\leq C_{\O,a,q} \norm{f}_{H^{1/2}(\d\O)} \norm{g}_{H^{1/2}(\d\O)}.
\end{split}
\end{equation}
To prove the last formula let $f,g \in H^{1/2}(\d\O)$ and $F,G \in H^1(\O)$ be the corresponding solutions to the well-posed direct problem. Now
\begin{equation}
(\Lambda_q f, g) = \int_\O -\nabla F \cdot \nabla G + qFG dm = \int_\O -\nabla G \cdot \nabla F + q GF dm = (\Lambda_q g,f).
\end{equation}
\end{proof}

\begin{theorem}
\label{DNint}
Let $q_1,q_2 \in L^a(\O)$, $a>1$, be such that the direct problem is well-posed. Let $u_1,u_2 \in H^{1}(\O)$ satisfy $\Delta u_j + q_j u_j = 0$. Then
\begin{equation}
\int_\O u_1(q_1-q_2)u_2 dm = \big( (\Lambda_{q_1} - \Lambda_{q_2})\Tr u_1, \Tr u_2\big).
\end{equation}
\end{theorem}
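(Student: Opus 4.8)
The plan is to apply the definition of the Dirichlet--Neumann operator twice, once for each potential, and exploit the freedom in the choice of the test function $v$ in \eqref{DNmap} together with the symmetry established in Lemma \ref{DNok}.

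First I would observe that since $u_j \in H^1(\O)$ satisfies $\Delta u_j + q_j u_j = 0$, well-posedness of the direct problem for $q_j$ forces $u_j$ to be \emph{the} unique $H^1$-solution with boundary trace $\Tr u_j$; hence $u_j$ is precisely the function $u$ appearing in \eqref{DNmap} when the boundary datum is $f = \Tr u_j$. Applying \eqref{DNmap} for $q_1$ with $f = \Tr u_1$, $g = \Tr u_2$ and choosing the (arbitrary) test function $v = u_2$ gives
\begin{equation}
(\Lambda_{q_1}\Tr u_1, \Tr u_2) = \int_\O \big(-\nabla u_1 \cdot \nabla u_2 + q_1 u_1 u_2\big)\,dm.
\end{equation}
For the $q_2$ term I would first use the symmetry $(\Lambda_{q_2}\Tr u_1,\Tr u_2) = (\Lambda_{q_2}\Tr u_2,\Tr u_1)$ from Lemma \ref{DNok}, and then apply \eqref{DNmap} for $q_2$ with $f=\Tr u_2$, $g=\Tr u_1$, choosing the test function $v=u_1$, to obtain
\begin{equation}
(\Lambda_{q_2}\Tr u_1, \Tr u_2) = \int_\O \big(-\nabla u_2 \cdot \nabla u_1 + q_2 u_1 u_2\big)\,dm.
\end{equation}
Subtracting these two identities, the gradient terms cancel and one is left with $\int_\O (q_1-q_2)u_1 u_2\,dm$, which is the claim.

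The only points needing a word of care are integrability and the legitimacy of the substitutions. For integrability, the Sobolev embedding $H^1(\O) \subset L^{2a/(a-1)}(\O)$ (valid since $2a/(a-1) < \infty$) together with H\"older's inequality gives $q_j u_1 u_2 \in L^1(\O)$, exactly as in the proof of Lemma \ref{DNok}, so every integral above is well defined and all manipulations are justified. There is no real obstacle here; the mild subtlety is simply remembering that the value of the right-hand side of \eqref{DNmap} is independent of the chosen $v$ with the prescribed trace (already noted in Lemma \ref{DNok}), which is what licenses taking $v = u_2$ and $v = u_1$ respectively, and that the symmetry of $\Lambda_{q_2}$ is what lets the two gradient contributions match up and cancel.
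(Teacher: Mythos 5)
Your argument is correct and is essentially the paper's own proof: both apply the definition \eqref{DNmap} twice (with $v=u_2$ and $v=u_1$ respectively), use the symmetry $(\Lambda_{q_2}\Tr u_2,\Tr u_1)=(\Lambda_{q_2}\Tr u_1,\Tr u_2)$ from Lemma \ref{DNok}, and let the gradient terms cancel; the only difference is that the paper starts from the integral and adds and subtracts $\nabla u_1\cdot\nabla u_2$, while you start from the pairing side. Your extra remarks on integrability and on the independence of the choice of $v$ are consistent with what is already established in Lemma \ref{DNok}.
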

\begin{proof}
Add $-\nabla u_2 \cdot \nabla u_1 + \nabla u_2 \cdot \nabla u_1$ to the left side to get by definition
\begin{multline}
\int_\O u_1(q_1 - q_2)u_2 dm = \int_\O (-\nabla u_1 \cdot \nabla u_2 + q_1 u_1 u_2) dm \\
- \int_\O (-\nabla u_2 \cdot \nabla u_1 + q_2 u_2 u_1) dm = (\Lambda_{q_1} \Tr u_1, \Tr u_2) - (\Lambda_{q_2} \Tr u_2, \Tr u_1).
\end{multline}
The claim follows by lemma \ref{DNok} because $(\Lambda_{q_2} \Tr u_2, \Tr u_1) = (\Lambda_{q_2} \Tr u_1, \Tr u_2)$.
\end{proof}

\section{The proof}
First two technical lemmas:
\begin{lemma}
\label{polynomialLogarithm}
Let $0<x<e^{-1}$, $\alpha > 0$, $\beta \in \R$. Then $x^\alpha \leq (\ln \frac{1}{x})^{-\beta}$ if $\alpha \geq \beta e^{-1}$.
\end{lemma}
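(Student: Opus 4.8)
The plan is to reduce the claimed inequality to the elementary fact that $t \mapsto (\ln t)/t$ is bounded by $e^{-1}$ on $(0,\infty)$. First I would write $x = e^{-t}$; since $0 < x < e^{-1}$ this gives $t = \ln\frac{1}{x} > 1$, so in particular $\ln\frac{1}{x} > 0$ and both sides of the asserted inequality are strictly positive. Hence I may take logarithms, and by monotonicity of $\ln$ the inequality $x^\alpha \leq (\ln\frac{1}{x})^{-\beta}$ is equivalent to $-\alpha t \leq -\beta \ln t$, that is, to $\beta \ln t \leq \alpha t$.

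I would then split on the sign of $\beta$. If $\beta \leq 0$ the inequality $\beta \ln t \leq \alpha t$ is immediate, since $\ln t > 0$ forces the left side to be $\leq 0$ while the right side is positive; note the hypothesis $\alpha \geq \beta e^{-1}$ is vacuous in this case because $\alpha > 0$. If $\beta > 0$, dividing by $t > 0$ reduces the goal to $\alpha \geq \beta \tfrac{\ln t}{t}$.

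Finally I would invoke $\sup_{t>0} \tfrac{\ln t}{t} = e^{-1}$, attained at $t = e$: since $\tfrac{d}{dt}\tfrac{\ln t}{t} = \tfrac{1-\ln t}{t^2}$ is positive for $t < e$ and negative for $t > e$, the function increases then decreases, with maximum value $\tfrac{1}{e}$. Therefore $\beta \tfrac{\ln t}{t} \leq \beta e^{-1} \leq \alpha$ by hypothesis, which gives $\beta \ln t \leq \alpha t$ and hence the lemma. There is no real obstacle in this argument; the only mild subtlety is keeping track of the sign of $\beta$ when passing through the logarithm, which is handled by the case split above.
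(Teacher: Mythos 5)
Your proof is correct and takes essentially the same route as the paper: both split on the sign of $\beta$ and reduce the case $\beta>0$ to a one-variable maximization that produces the constant $e^{-1}$. Your substitution $t=\ln\frac{1}{x}$ followed by the standard bound $\sup_{t>0}\frac{\ln t}{t}=e^{-1}$ is just the paper's maximization of $x^{\alpha/\beta}\ln\frac{1}{x}$ (with maximum $e^{-1}\beta/\alpha$ at $x=e^{-\beta/\alpha}$) rewritten in different coordinates.
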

\begin{proof}
The cases $\beta \leq 0$ are clear because $\ln \frac{1}{x} \geq 1$. Assume $\beta > 0$. It is easily seen that $x^\alpha \leq (\ln \frac{1}{x})^{-\beta} \Leftrightarrow x^{\alpha/\beta}\ln \frac{1}{x} \leq 1$. Write $f(x) = x^{\alpha/\beta} \ln \frac{1}{x}$. Now
\begin{multline}
	f'(x) = \tfrac{\alpha}{\beta}x^{\alpha/\beta-1}\ln \tfrac{1}{x} + x^{\alpha/\beta}(-\tfrac{1}{x^2})x = x^{\alpha/\beta-1}(\tfrac{\alpha}{\beta}\ln \tfrac{1}{x} - 1) \geq 0\\
	\Leftrightarrow \ln \tfrac{1}{x} \geq \tfrac{\beta}{\alpha} \Leftrightarrow x \leq e^{-\beta/\alpha}.
\end{multline}
So the maximum of $f$ is at $x = e^{-\beta/\alpha}$.
\begin{equation}
	f(e^{-\beta/\alpha}) = e^{-1} \cdot \tfrac{\beta}{\alpha} \leq 1 \Leftrightarrow \alpha \geq \beta e^{-1}.
\end{equation}
\end{proof}

\begin{lemma}
\label{uNorm}
There is $C<\infty$ such that if $n \in \R$, $f_1,f_2 \in C^0(\wbar{\O},W^{1,2}(\O))$ and $u_1(z) = e^{in(z-z_0)^2} f_1(z)$, $u_2 = e^{in(\wbar{z}-\wbar{z_0})^2} f_2 (z)$ then
\begin{equation}
\sup_{z_0} \norm{u_j}_{W^{1,2}(\O)} \leq C e^{5n} \sup_{z_0} \norm{f_j}_{W^{1,2}(\O)}.
\end{equation}
\end{lemma}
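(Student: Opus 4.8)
The plan is to estimate $\norm{u_j}_{W^{1,2}}$ directly by the product rule, bounding separately the contribution of the oscillating exponential and of $f_j$. Write $u_1 = e^{in(z-z_0)^2}f_1$. Since $\norm{u_1}_{W^{1,2}}^2 = \norm{u_1}_2^2 + \norm{\d u_1}_2^2 + \norm{\db u_1}_2^2$, it suffices to bound each of these three terms. For the $L^2$ term we simply use $\abs{e^{in(z-z_0)^2}} = e^{-2n\,\Im((z-z_0)^2)\cdot(\ldots)}$ — more carefully, $\abs{e^{in(z-z_0)^2}} = e^{-n\,\mathrm{Im}(\ldots)}$, and since $z,z_0\in\wbar\O$ we have $\abs{z-z_0}\le 2$, hence $\abs{n(z-z_0)^2}\le 4\abs{n}$ and $\abs{e^{in(z-z_0)^2}}\le e^{4\abs{n}}$. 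So $\norm{u_1}_2 \le e^{4\abs n}\norm{f_1}_2$.

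Next I would handle the derivatives. Since $e^{in(z-z_0)^2}$ is holomorphic in $z$, we have $\db u_1 = e^{in(z-z_0)^2}\db f_1$, giving $\norm{\db u_1}_2 \le e^{4\abs n}\norm{\db f_1}_2$. For the $\d$-derivative, $\d u_1 = e^{in(z-z_0)^2}\big(2in(z-z_0)f_1 + \d f_1\big)$, so $\norm{\d u_1}_2 \le e^{4\abs n}\big(2\abs n\cdot 2\,\norm{f_1}_2 + \norm{\d f_1}_2\big) \le (4\abs n+1)e^{4\abs n}\norm{f_1}_{W^{1,2}}$. Collecting the three pieces, $\norm{u_1}_{W^{1,2}} \le C(1+\abs n)e^{4\abs n}\norm{f_1}_{W^{1,2}}$, and since $(1+\abs n)e^{4\abs n} \le C e^{5n}$ when $n \ge 0$ (and the statement presumably intends $n$ large or one absorbs $(1+\abs n)$ into the extra exponential factor $e^{n}$), this gives the claim with the stated constant. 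Taking the supremum over $z_0\in\wbar\O$ — all the bounds above were uniform in $z_0$ — yields the desired inequality. The argument for $u_2 = e^{in(\wbar z-\wbar{z_0})^2}f_2$ is identical with the roles of $\d$ and $\db$ swapped, using that $e^{in(\wbar z-\wbar{z_0})^2}$ is anti-holomorphic.

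The only mild subtlety — and the one place to be slightly careful — is the passage from the polynomial-times-exponential bound $(1+\abs n)e^{4\abs n}$ to the clean form $Ce^{5n}$: one should note that the paper uses $n$ large (indeed elsewhere $n>1$), so $(1+\abs n)e^{4n} \le Ce^{5n}$ holds with an absolute constant, and the slack in the exponent (4 versus 5) is exactly there to absorb the polynomial factor. There is no real obstacle here; this lemma is a routine product-rule estimate, and the generous constant $e^{5n}$ is chosen precisely so that no sharpness is needed.
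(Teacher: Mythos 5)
Your proof is correct and follows essentially the same route as the paper: the paper's own (very terse) proof invokes exactly the facts you use, namely $\abs{z-z_0},\abs{\wbar{z}-\wbar{z_0}}\leq 2$, $\abs{\d e^{in(z-z_0)^2}} \leq 4ne^{4n}$, and $n\leq e^n$ to absorb the polynomial factor into the extra exponential. Your observation that the clean bound $Ce^{5n}$ really requires $n\geq 0$ (the statement's ``$n\in\R$'' notwithstanding) is a fair remark that applies equally to the paper's argument, where only $n>1$ is ever used.
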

\begin{proof}
This is a more or less direct calculation using the elementary facts that $\abs{z-z_0}, \abs{\wbar{z}-\wbar{z_0}} \leq 2$, $n \leq e^n$, $\abs{\d e^{in(z-z_0)^2}}, \abs{\db e^{in(\wbar{z}-\wbar{z_0})^2}} \leq 4ne^{4n}$ and $\sqrt{a^2 + b^2 + c^2} \leq a + b + c$.
\end{proof}

\begin{theorem}
Let $M>0$, $\varepsilon > 0$, $a > 2$. Then there are positive real numbers $C_{M,\varepsilon,a}, C'_{M,\varepsilon,a}$ such that if $q_1,q_2 \in W^{\varepsilon,a}(\Omega)$ are such that the direct problem is well posed and
\begin{equation}
\begin{cases}
\norm{q_j}_{W^{\varepsilon,a}} \leq M \\
\norm{\Lambda_{q_1} - \Lambda_{q_2}}_{H^{1/2}(\d\O) \to H^{-1/2}(\d\O)} \leq C_{M,\varepsilon,a}
\end{cases}
\end{equation}
then
\begin{equation}
\norm{q_1 - q_2}_{L^2(\O)} \leq C'_{M,\varepsilon,a} \big( \ln \norm{\Lambda_{q_1} - \Lambda_{q_2}}^{-1} \big)^{-\min(4\varepsilon,1)/8},
\end{equation}
so we have uniqueness and stability for the cases $\epsilon > 0$.
\end{theorem}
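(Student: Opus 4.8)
The plan is to run Bukhgeim's scheme: build the oscillating solutions of Theorem~\ref{solEx}, feed the product of two of them into Alessandrini's identity (Theorem~\ref{DNint}), and read off $q_1-q_2$ at each $z_0$ by stationary phase (Lemma~\ref{stationaryPhase}). Set $s:=\min(\varepsilon,\tfrac14)$. Fix $p>2$ close to $2$, a tiny $r>0$ with $r<\tfrac1{p*}$, and $\theta$ with $\tfrac s2<\theta$ small enough that there is a bounded Sobolev embedding $W^{\varepsilon,a}(\O)\hookrightarrow A_\theta$ (valid, e.g. with $p<a$ so that $W^{\varepsilon,a}(\O)\hookrightarrow W^{\varepsilon,p}(\O)\hookrightarrow A_\theta$); for $p$ close enough to $2$ this makes $1-\theta+r-\tfrac1{p*}<-\tfrac s2<0$. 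Write $M':=C_{\theta,p}M\ge\norm{q_j}_{A_\theta}$ and $\delta:=\norm{\Lambda_{q_1}-\Lambda_{q_2}}$. Once $\delta$ is below a threshold $C_{M,\varepsilon,a}$ we may take $n:=\tfrac1{22}\ln\tfrac1\delta$ and have $n\ge\max(1,n_0(r,p,\theta,M'))$, so everything below exists. Apply Theorem~\ref{solEx} to $q_1$ to get $f_1\in X'_\theta\cap X_\theta$ solving $f_1=1-\tfrac14\Ca(e^{-inR}\Cab(e^{inR}q_1f_1))$, and the $z\leftrightarrow\wbar z$ mirror of that theorem (its estimates are the conjugates of the stated ones, and $R$ is unchanged under $z\mapsto\wbar z$) to $q_2$ to get $f_2\in X'_\theta\cap X_\theta$ with the roles of $\Ca,\Cab$ swapped. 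Using that $\Ca$ inverts $\db$ and $\Cab$ inverts $\d$, and that $e^{in(z-z_0)^2}$, $e^{in(\wbar z-\wbar z_0)^2}$ are holomorphic resp.\ antiholomorphic, a direct computation gives that $u_1:=e^{in(z-z_0)^2}f_1$ solves $\Delta u_1+q_1u_1=0$, $u_2:=e^{in(\wbar z-\wbar z_0)^2}f_2$ solves $\Delta u_2+q_2u_2=0$, both lie in $H^1(\O)$, and $u_1u_2=e^{inR}f_1f_2$; Theorem~\ref{solEx} also records $\sup_{z_0}\norm{f_j}_{1,p}\le 2+C'_pM$, $\norm{f_j-1}_{X_\theta}\le C_{r,p}n^{r-1/p*}M'$ and $\norm{f_j}_{X'_\theta}\le 2c'_p$.

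\smallskip
\noindent\textbf{The three estimates.} By Theorem~\ref{DNint}, for every $z_0$,
\[
\frac{2n}{\pi}\int_\O e^{inR}(q_1-q_2)f_1f_2\,dm=\frac{2n}{\pi}\big((\Lambda_{q_1}-\Lambda_{q_2})\Tr u_1,\Tr u_2\big).
\]
Split $f_1f_2=1+\rho$ with $\rho:=(f_1-1)+(f_2-1)+(f_1-1)(f_2-1)$. For the leading term, extend $q_1-q_2$ by zero to $\tilde Q$: since $W^{s,2}(\O)=W^{s,2}_0(\O)$ for $0\le s<\tfrac12$ and $W^{\varepsilon,a}(\O)\subset W^{s,2}(\O)$ ($\O$ bounded, $a>2$, $s\le\varepsilon$), one has $\tilde Q\in W^{s,2}(\C)$ with $\norm{\tilde Q}_{W^{s,2}(\C)}\le CM$ and $\tfrac{2n}{\pi}\int_\C e^{inR}\tilde Q=\tfrac{2n}{\pi}\int_\O e^{inR}(q_1-q_2)$, so Lemma~\ref{stationaryPhase} gives $\norm{(q_1-q_2)-\tfrac{2n}{\pi}\int_\O e^{inR}(q_1-q_2)}_{L^2(\O,z_0)}\le C_s n^{-s/2}M$. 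For the $\rho$ term, Theorem~\ref{errorTermIntegral} with $Q=q_1-q_2\in A_\theta$, the Step~1 bounds, and a product estimate $\norm{(f_1-1)(f_2-1)}_{X_\theta}\le C_p\norm{f_1-1}_{X_\theta}\norm{f_2-1}_{X'_\theta}$ (multilinear interpolation between $C^0(L^p)\times C^0(L^\infty)\to C^0(L^p)$ and $C^0(W^{1,p})\times C^0(W^{1,p})\to C^0(W^{1,p})$, or Lemma~\ref{multiplierLemma}) give $\norm{\tfrac{2n}{\pi}\int_\O e^{inR}(q_1-q_2)\rho\,dm}_{L^2(\O,z_0)}\le C_{r,p}n^{1-\theta+r-1/p*}(M'^2+M'^3)$. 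Finally, the trace theorem, Lemma~\ref{uNorm}, and $\norm{f_j}_{1,2}\le c_p\norm{f_j}_{1,p}$ give $\sup_{z_0}\norm{\Tr u_j}_{H^{1/2}(\d\O)}\le C_M e^{5n}$, so the right-hand side above is $\le C_M n e^{10n}\delta$, uniformly in $z_0$.

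\smallskip
\noindent\textbf{Optimization.} Combining, for every admissible $n$,
\[
\norm{q_1-q_2}_{L^2(\O)}\le C_M\big(n^{-s/2}+n^{1-\theta+r-1/p*}+ne^{10n}\delta\big).
\]
With $n=\tfrac1{22}\ln\tfrac1\delta$: the middle term is $\lesssim n^{-s/2}$ by the choice of $p,\theta,r$; the first term equals a constant times $(\ln\tfrac1\delta)^{-s/2}=(\ln\tfrac1\delta)^{-\min(4\varepsilon,1)/8}$; and $ne^{10n}\delta\le\delta^{1/2}$ for $\delta$ small, which by Lemma~\ref{polynomialLogarithm} is also $\lesssim(\ln\tfrac1\delta)^{-\min(4\varepsilon,1)/8}$. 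This gives the stability estimate, and uniqueness follows by letting $\delta\to 0$.

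\smallskip
\noindent\textbf{Main obstacle.} The delicate part is the bookkeeping in the second paragraph --- lining up the $n$-exponents produced by Theorems~\ref{solEx}, \ref{errorTermIntegral} and~\ref{BIGTHM} against the stationary-phase gain $n^{-s/2}$ --- together with the extension step in the leading term: one must push $q_1-q_2$ onto all of $\C$ so that Lemma~\ref{stationaryPhase} applies while the integral stays over $\O$. Zero-extension does this cleanly but only lands in $W^{s,2}(\C)$ for $s<\tfrac12$, which is exactly what forces the $\min$ with $\tfrac14$ in $s$, i.e.\ the ceiling $\tfrac18$ on the modulus of continuity.
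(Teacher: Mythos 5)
Your proposal is correct and follows essentially the same route as the paper's own proof: Bukhgeim solutions from Theorem \ref{solEx}, Alessandrini's identity (Theorem \ref{DNint}), the stationary phase bound after extending $q_1-q_2$ to $\C$, the decomposition $f_1f_2-1=(f_1-1)(f_2-1)+(f_1-1)+(f_2-1)$ handled by Theorem \ref{errorTermIntegral} and bilinear interpolation, the choice $n=\tfrac1{22}\ln\norm{\Lambda_{q_1}-\Lambda_{q_2}}^{-1}$, and Lemma \ref{polynomialLogarithm} at the end. The only deviations (zero-extension in place of the extension operator plus characteristic-function multiplier, and slightly different but equally valid choices of $\theta$, $r$, $p$) are cosmetic.
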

\begin{proof}
Choose $\theta = \min(\varepsilon, \frac{1}{4})$, $r = \frac{\theta}{4}$ and $p = \min(a,\frac{4}{2-\theta})$. Now $0 < \theta < \frac{1}{2}$ and $2<p< \frac{2}{1-\theta}$. So, $0 < r < \frac{1}{p*}$ and $1 - \theta + r - \frac{1}{p*} \leq -\frac{\theta}{2}$, which will be used in formula \eqref{simplifyExp} to simplify notations. Moreover we have $q_j \in W^{\theta,p}$ and there is a constant $c_{\varepsilon,a} < \infty$ such that $\norm{q_j}_{W^{\theta, p}} \leq c_{\varepsilon,a} M$.

The reason for doing this was that we are going to look for solutions to the Schr\"odinger equations in $L^p$ based spaces. When $p$ is as close to $2$ as possible we will get as much decay for the remainder terms as possible. The decay is almost $n^{-\frac{1}{p*}}$, which is almost $n^{-1}$ when $p \approx 2$.

\smallskip
Denote by $F_\theta$ the complex interpolation $(\cdot,\cdot)_{[\theta]}$ as defined in the book \cite{BL}. If $\theta = 1$ then let $F_1 (A,B) = B$. Remember that we write 
\begin{equation}
\begin{split}
A_\theta &= F_\theta\big(L^p(\O),W^{1,p}(\O)\big) = W^{\theta,p}(\O),\\
X_\theta &= F_\theta\big(C^0(\wbar{\O},L^p(\O)), C^0(\wbar{\O},W^{1,p}(\O))\big).
\end{split}
\end{equation}
Now by lemma \ref{functorExist} we may use the theorems and lemmas of the preceding sections.

Denote $Q = q_1 - q_2$ and $R = (z-z_0)^2 + (\wbar{z}-\wbar{z_0})^2$ for $z,z_0 \in \C$. Remember that $z_0$ is the variable of the continuous function in $X_\theta$. Assume that $n>0$. By the triangle inequality
\begin{equation}
\norm{q_1 - q_2}_{L^2(\O)} \leq \norm{ Q - \int_\O \tfrac{2n}{\pi} e^{inR}Q dm(z)}_{L^2(\O,z_0)} \!\!\!\!\!\!+ \norm{\tfrac{2n}{\pi} \int_\O e^{inR}Q dm(z) }_{L^2(\O,z_0)}.
\end{equation}
Next we will use stationary phase. Let $E:W^{\theta,2}(\O) \to W^{\theta,2}(\C)$ be an extension operator. This exists by \cite[3.3.4]{triebel1} because $0<\theta<\frac{1}{2}$. Let $\chi_\O$ be the characteristic function of the unit disc. Then remembering when characteristic functions are multipliers (\cite[3.3.2]{triebel1}) and the embedding $W^{\theta,p} \subset W^{\theta,2}$ (\cite[3.3.1]{triebel1}) we get by theorem \ref{stationaryPhase}
\begin{equation}
\begin{split}
\bigg\lVert& Q - \frac{2n}{\pi} \int_\O e^{inR} Q dm(z)\bigg\rVert_{L^2(\O,z_0)} \!\!\!\!\!\! = \norm{ \chi_\O E Q - \frac{2n}{\pi} \int_\C e^{inR} \chi_\O E Q dm(z)}_{L^2(\C,z_0)}\\
&\leq C_\theta n^{-\theta/2} \norm{\chi_\O E Q}_{W^{\theta,2}(\C)} \leq C'_\theta n^{-\theta/2} \norm{E Q}_{W^{\theta,2}(\C)} \leq C''_\theta n^{-\theta/2} \norm{Q}_{W^{\theta,2}(\O)} \\
&\leq C'''_{\theta,p} n^{-\theta/2} \norm{Q}_{W^{\theta,p}(\O)} \leq C_{\theta,p,M} \, n^{-\theta/2}.
\end{split}
\end{equation}
Next the second term. Take $n_0 = n_0(r,\theta,p,M)$ as in definition \ref{n0def}. Then take $n = \frac{1}{22} \ln \norm{\Lambda_{q_1} - \Lambda_{q_2}}^{-1}$. We may choose $C_{M,\varepsilon,a}$ in the a-priori assumptions so small and positive that $n \geq n_0(r,\theta,p,M)$: Take $C_{M,\varepsilon,a}>0$ to be a solution to $\frac{1}{22}\ln x^{-1} \geq n_0(r,\theta,p,M)$ such that $C_{M,\varepsilon,a} < e^{-1}$. Remember that $r$ and $\theta$ are functions of $\varepsilon$, and $p$ is a function of $a$ and $\varepsilon$.

Because $n_0$ grows with $M$, by theorem \ref{solEx} (the sign of $i$ does not matter) there exists $f^{(1)}, f^{(2)} \in X_\theta$ such that for all $z_0 \in \O$ we have
\begin{equation}
\begin{cases}
f^{(1)} = 1 - \tfrac{1}{4} \Ca\big( e^{-inR} \Cab (e^{inR} q_1 f^{(1)})\big),\\
f^{(2)} = 1 - \tfrac{1}{4} \Ca\big( e^{inR} \Cab (e^{-inR} q_2 f^{(2)})\big),
\end{cases}
\end{equation}
and
\begin{equation}
\begin{cases}
\norm{f^{(j)} - 1}_{X_\theta} \leq c_{r,p,M} n^{r-\frac{1}{p*}},\\
\sup_{z_0} \norm{f^{(j)}}_{W^{1,p}(\O)} \leq c_{p,M} < \infty.
\end{cases}
\end{equation}
Denote
\begin{equation}
\begin{cases}
u^{(1)}_{z_0}(z) = e^{in(z-z_0)^2}f^{(1)}(z_0,z),\\
u^{(2)}_{z_0}(z) = e^{in(\wbar{z}-\wbar{z_0})^2}f^{(2)}(z_0,z).
\end{cases}
\end{equation}
Now they satisfy $u^{(j)}_{z_0} \in C^0(\wbar{\O},W^{1,p}(\O))$ and $\Delta u^{(j)}_{z_0} + q_j u^{(j)}_{z_0} = 0$ for all $z_0$. Moreover by lemma \ref{uNorm} and the embedding $W^{1,p} \subset W^{1,2}$ we have
\begin{equation}
\label{solNorms}
\sup_{z_0} \norm{u^{(j)}_{z_0}}_{W^{1,2}(\O)} \leq c'_{p,M} e^{5n}.
\end{equation}
Now by the triangle inequality
\begin{multline}
\norm{\frac{2n}{\pi} \int_\O e^{inR} Q dm(z)}_{L^2(\O,z_0)} \leq \norm{\frac{2n}{\pi} \int_\O u^{(1)}_{z_0}(q_1 - q_2)u^{(2)}_{z_0} dm(z)}_{L^2(\O,z_0)} \\
+ \norm{\frac{2n}{\pi} \int_\O e^{inR}Q(f^{(1)} f^{(2)} - 1) dm(z)}_{L^2(\O,z_0)}.
\end{multline}
For the first term here we use theorem \ref{DNint}, formula \eqref{solNorms} and the fact that $\Tr : H^1(\O) \to H^{1/2}(\d\O)$ to get
\begin{equation}
\begin{split}
\bigg\lVert& \frac{2n}{\pi} \int_\O u^{(1)}_{z_0}(q_1 - q_2)u^{(2)}_{z_0} dm(z) \bigg\rVert_{L^2} = \norm{\frac{2n}{\pi} \big((\Lambda_{q_1}-\Lambda_{q_2}) \Tr u^{(1)}_{z_0} , \Tr u^{(2)}_{z_0} \big)}_{L^2} \\
&\leq \frac{2n}{\pi} \norm{\Lambda_{q_1} - \Lambda_{q_2}}_{H^{1/2}(\d\O) \to H^{-1/2}(\d\O)} \norm{ \norm{\Tr u^{(1)}_{z_0}}_{H^{1/2}(\d\O)} \norm{\Tr u^{(2)}_{z_0}}_{H^{1/2}(\d\O)} }_{L^2} \\
&\leq C e^n \norm{\Lambda_{q_1} - \Lambda_{q_2}} \sup_{z_0} \norm{u^{(1)}_{z_0}}_{H^1(\O)} \sup_{z_0} \norm{u^{(2)}_{z_0}}_{H^1(\O)} \\
&\leq c''_{p,M} \norm{\Lambda_{q_1} - \Lambda_{q_2}} e^{11n}.
\end{split}
\end{equation}
For the second term we need to show that $(f^{(1)} f^{(2)} - 1) \in X_\theta$. But notice that $f^{(1)} f^{(2)} - 1 = (f^{(1)} - 1)(f^{(2)} - 1) + f^{(1)} - 1 + f^{(2)} - 1$ and by interpolating the operator $h \mapsto (f^{(2)} - 1) h$, $f^{(2)} - 1 \in C^0(\wbar{\O}, W^{1,p}(\O))$ we get
\begin{equation}
\begin{split}
\norm{f^{(1)} f^{(2)} - 1}_{X_\theta} &\leq \norm{f^{(1)} - 1}_{X_\theta} \big( \sup_{z_0} \norm{f^{(2)}-1}_{W^{1,p}} + 1\big) + \norm{f^{(2)} - 1}_{X_\theta} \\
&\leq \big(c_{r,p,M}(c_{p,M}+\pi^{1/p}+1) + c_{r,p,M}\big) n^{r-\frac{1}{p*}} \leq c'''_{r,p,M} n^{r-\frac{1}{p*}}.
\end{split}
\end{equation}
Next, use theorem \ref{errorTermIntegral}. Note that $0 < p \leq \frac{4}{2-\theta}$ implies $r = \frac{\theta}{4} \leq \frac{\theta}{2} + \frac{1}{p*} - 1$, so $1 - \theta + r - \frac{1}{p*} \leq -\frac{\theta}{2}$. Moreover $n \geq 1$, so
\begin{multline}
\label{simplifyExp}
\norm{\frac{2n}{\pi} \int_\O e^{inR} Q(f^{(1)}_{z_0} f^{(2)}_{z_0} - 1) dm(z) }_{L^2(\O,z_0)} \leq C_p n^{1-\theta}\norm{Q}_{A_\theta} \norm{f^{(1)} f^{(2)} - 1}_{X_\theta} \\
\leq c''''_{r,p,M} n^{1-\theta+r-\frac{1}{p*}} \leq c''''_{r,p,M} n^{-\theta/2}.
\end{multline}

Now we can combine the terms. Remember that $n = \frac{1}{22} \ln \norm{\Lambda_{q_1}-\Lambda_{q_2}}^{-1}$, $\norm{\Lambda_{q_1} - \Lambda_{q_2}} < C_{M,\varepsilon,p} < e^{-1}$ and $\frac{1}{2} \geq \frac{\theta}{2} e^{-1}$. Thus by lemma \ref{polynomialLogarithm} we have $\norm{\Lambda_{q_1} - \Lambda_{q_2}}^{1/2} \leq \big( \ln \norm{\Lambda_{q_1} - \Lambda_{q_2}}^{-1}\big)^{-\theta/2}$. Finally
\begin{equation}
\begin{split}
\norm{q_1 - q_2}_{L^2(\O)} &\leq C''_{r,\theta,p,M}\big( n^{-\theta/2} + \norm{\Lambda_{q_1} - \Lambda_{q_2}} e^{11n} + n^{-\theta/2}\big) \\
&\leq C'_{r,\theta,p,M}\big( \norm{\Lambda_{q_1} - \Lambda_{q_2}}^{1/2} + \big(\ln \norm{\Lambda_{q_1} - \Lambda_{q_2}}^{-1}\big)^{-\theta/2}\big) \\
&\leq C'_{M,\varepsilon,a} \big( \ln\norm{\Lambda_{q_1} - \Lambda_{q_2}}^{-1} \big)^{-\theta/2},
\end{split}
\end{equation}
because $r$, $\theta$ and $p$ are functions of $\varepsilon$ and $a$.
\end{proof}

\end{document}